\documentclass[leqno]{amsart}
\usepackage{amsmath}
\usepackage{amssymb}
\usepackage{amsthm}
\usepackage{enumerate}
\usepackage[mathscr]{eucal}
\usepackage{graphicx}
\theoremstyle{plain}
\newtheorem{theorem}{Theorem}[section]
\newtheorem{prop}[theorem]{Proposition}
\newtheorem{cor}{Corollary}[theorem]

\theoremstyle{definition}
\newtheorem{definition}{Definition}[section]
\newtheorem{remark}{Remark}[section]

\newtheorem{example}{Example}[theorem]
\newtheorem*{acknowledgement}{\textnormal{\textbf{Acknowledgements}}}
\usepackage[pagewise]{lineno}
\usepackage{color}

\begin{document}
\title[On extreme contractions between real Banach spaces ]{On extreme contractions between real Banach spaces}
\author[Debmalya Sain, Kallol Paul and Arpita Mal]{Debmalya Sain, Kallol Paul and Arpita Mal}

\newcommand{\acr}{\newline\indent}

\address[Sain]{Department of Mathematics\\ Indian Institute of Science\\ Bengaluru 560012\\ Karnataka \\India\\ }
\email{saindebmalya@gmail.com}

\address[Paul]{Department of Mathematics\\ Jadavpur University\\ Kolkata 700032\\ West Bengal\\ INDIA}
\email{kalloldada@gmail.com}

\address[Mal]{Department of Mathematics\\ Jadavpur University\\ Kolkata 700032\\ West Bengal\\ INDIA}
\email{arpitamalju@gmail.com}

\thanks{ The research of the first author is sponsored by Dr. D. S. Kothari Postdoctoral fellowship. The third author would like to thank UGC, Govt. of India for the financial support.} 

\subjclass[2010]{Primary 46B20, Secondary 47L25}
\keywords{Extreme contractions; Birkhoff-James orthogonality; norm attainment set}

\begin{abstract}
We completely characterize extreme contractions between two-dimensional strictly convex and smooth real Banach spaces, perhaps for the very first time. In order to obtain the desired characterization, we introduce the notions of (weakly) compatible point pair (CPP) and $ \mu- $compatible point pair ($ \mu- $CPP) in the geometry of Banach spaces. As a concrete application of our abstract results, we describe all rank one extreme contractions in $ \mathbb{L}(\ell_4^2, \ell_4^2) $ and $ \mathbb{L}(\ell_4^2, \mathbb{H}) $, where $\mathbb{H}$ is any Hilbert space.  We also prove that there does not exist any rank one extreme contractions in $ \mathbb{L}(\mathbb{H}, \ell_{p}^2), $ whenever $ p $ is even and $\mathbb{H}$ is any Hilbert space. We further study extreme contractions between infinite-dimensional Banach spaces and obtain some analogous results. Finally, we characterize real Hilbert spaces among real Banach spaces in terms of CPP, that substantiates our motivation behind introducing these new geometric notions. 
\end{abstract}

\maketitle
\section{Introduction.}

\medskip
Characterization of extreme contractions between Banach spaces is a classical problem in the geometry of Banach spaces. While the characterization problem has a definite solution for various particular pairs of Banach spaces \cite{G,Ga,I,K,Ki,Sh}, to the best of our knowledge, the problem remains unsolved in its most general form, even in the two-dimensional case. This is somewhat surprising, since which norm one operators are extreme contractions,  should be, at least in principle, entirely governed by the geometries of the domain and the range spaces. Indeed, this is the motivation behind our quest for a geometric property in the framework of Banach spaces, that would completely characterize the class of extreme contractions between Banach spaces. Our present work is an initial step in that direction. We introduce two different geometric concepts, whose importance in studying extreme contractions between Banach spaces would be self-explanatory in due course of time. As a matter of fact, we completely characterize extreme contractions on a two-dimensional strictly convex and smooth real Banach space, in terms of the geometric concepts introduced by us. Without further ado, let us establish the relevant notations and terminologies to be used throughout the paper.\\

Let $\mathbb{X},~ \mathbb{Y}$ be Banach spaces. Throughout the paper, we assume the Banach spaces to be real and of dimension greater than 1. Let $B_{\mathbb{X}} = \{x \in \mathbb{X} \colon \|x\| \leq 1\}$ and
$S_{\mathbb{X}} = \{x \in \mathbb{X} \colon \|x\|=1\}$ be the unit ball and the unit sphere of $\mathbb{X}$, respectively. Let $\mathbb{L}(\mathbb{X},\mathbb{Y})(\mathbb{K}(\mathbb{X},\mathbb{Y}))$ denote the Banach space of all bounded (compact) linear operators from $\mathbb{X}$ to $\mathbb{Y}$, endowed with the usual operator norm. We write $\mathbb{L}(\mathbb{\mathbb{X}, \mathbb{Y}})= \mathbb{L}(\mathbb{X})(\mathbb{K}(\mathbb{\mathbb{X}, \mathbb{Y}})= \mathbb{K}(\mathbb{X})), $ if $\mathbb{X}= \mathbb{Y}$. The notion of orthogonality in a Banach space plays an important role in determining the geometry of the underlying space. There are several notions of orthogonality in a Banach space, all of which are equivalent if the norm is induced by an inner product, i.e., if the Banach space is a  Hilbert space. For our present purpose, we require the following two notions of orthogonality in a Banach space:

For any two elements  $x,y \in {\mathbb{X}}$, $x$ is said to be Birkhoff-James \cite{B,J} orthogonal to $y$, written as $x \perp_B y$, if $ \|x+\lambda y\|\geq\|x\|$ $\forall \lambda \in \mathbb{R}$ and  $x$ is said to be isosceles orthogonal \cite{AMW} to $y$, written as $x \perp_I y$, if $\|x+y\|=\|x-y\|$.  For $x \in \mathbb{X}$, we write $x^{\perp}=\{y \in \mathbb{X}: x\perp_B y\}$.

As we will see in the later part of the paper, the norm attainment set of a bounded linear operator plays a vital role in the study of extreme contractions between Banach spaces. For a bounded linear operator $T$ defined on a Banach space $\mathbb{X}$, let $M_T$ denote the collection of all unit vectors in $\mathbb{X}$ at which $T$ attains norm, i.e,
\[ M_T= \{  x \in S_ \mathbb{X} \colon\|Tx\| = \|T\| \}. \]

As mentioned in the beginning, let us introduce the following two geometric notions in order to have a better understanding of extreme contractions between Banach spaces.\\

\begin{definition}
Let $ \mathbb{X},~\mathbb{Y} $ be two Banach spaces and $x \in S_{\mathbb{X}},~y \in S_{\mathbb{Y}}. $ We say that $(x,~y)$ is a weakly compatible point pair if there exists $z \in x^{\perp}\cap S_{\mathbb{X}}$,~ $w \in y^{\perp}\cap S_{\mathbb{Y}}$ and two constants $r>0,~\mu >0$ such that
\begin{eqnarray}
 ax+bz \in B(x,~ r)\cap S_{\mathbb{X}} \Longrightarrow \|ay+b \mu w\|\leq 1.
 \end{eqnarray}
In this case we write $(x,~y)$ is a weak CPP. Any pair of constants $ (r,~\mu), $ that works in the above sense, is said to be a weak CPP constant for the pair $ (x,~y). $\\
In particular, if $z_0 \in x^{\perp}\cap S_{\mathbb{X}}$,~ $w_0 \in y^{\perp}\cap S_{\mathbb{Y}}$ satisfy $(1)$ then we say that $(x,~y)$ is $\mu-$CPP with respect to $(z_0,~w_0)$.
\end{definition}

\begin{definition}
Let $ \mathbb{X},~\mathbb{Y} $ be two Banach spaces and $x \in S_{\mathbb{X}},~y \in S_{\mathbb{Y}}. $ We say that $(x,~y)$ is a compatible point pair if there exists two constants $r>0,~\mu >0$ such that for any $z \in x^{\perp}\cap S_{\mathbb{X}}$ and for any $w \in y^{\perp}\cap S_{\mathbb{Y}},~ $  $ax+bz \in B(x,~ r)\cap S_{\mathbb{X}} \Longrightarrow \|ay+b \mu w\|\leq 1.$  
In this case we write $(x,~y)$ is a CPP. Any pair of constants $ (r,~\mu), $ that works in the above sense, is said to be a CPP constant for the pair $ (x,~y). $
\end{definition}

In light of these newly introduced geometric notions, we study extreme contractions between Banach spaces. First we explore some basic properties of these notions. We further study the connection between operator norm attainment and weak CPP. We also prove that in a finite-dimensional smooth Banach space $ \mathbb{X}, $ there exists nontrivial weak CPP. Next, we establish an interesting connection between weak CPP and a norm one linear operator of rank one being an extreme contraction. We extend the result for rank one bounded linear operators from a smooth and reflexive Banach space with Kadets-Klee property to a smooth Banach space. Let us recall that a normed space $\mathbb{X}$ is said to have Kadets-Klee property if for any sequence $\{x_n\}$ in $\mathbb{X}$, $x_n \rightharpoonup x$ and $\|x_n\| \longrightarrow \|x\|$ implies that $x_n \longrightarrow x$. As we will see, using the notion of CPP, it is possible to completely characterize rank one extreme contractions between two-dimensional strictly convex and smooth Banach spaces. As a concrete application of our result, we describe all rank one extreme contractions in $ \mathbb{L}(\ell_4^2,\ell_4^2) $ and $ \mathbb{L}(\ell_4^2,\mathbb{H}) $, where $\mathbb{H}$ is any Hilbert space. We also prove that there does not exist any rank one extreme contractions in $\mathbb{L}(\mathbb{H}, \ell_{p}^2), $ whenever $ p $ is even and $\mathbb{H}$ is any Hilbert space. However, in order to characterize rank two extreme contractions between two-dimensional strictly convex and smooth Banach spaces, we do require the notion of $ \mu- $CPP. Combining our results, we obtain a complete characterization of extreme contractions between two-dimensional strictly convex and smooth Banach spaces. Indeed, this result can be viewed as an abstract generalization of the characterization of extreme contractions between  $ \ell_p^2 $ spaces by Grzaslewicz \cite{G}, Bandyopadhyay and Roy \cite{BR} and is a major highlight of the present paper. For a particular class of compact operators from a smooth and reflexive Banach space $ \mathbb{X} $ with Kadets-Klee property to a smooth Banach space $\mathbb{Y}$, we prove an analogous connection between extreme contractions and CPP. As some further applications of extreme contractions and CPP in the study of geometry of Banach spaces, we relate these notions with strict convexity. We also obtain a characterization of real Hilbert space in terms of CPP, namely, a real Banach space $\mathbb{X}$ is a Hilbert space if and only if $(x,~x)$ is a CPP for each $x \in \mathbb{X}$ with CPP constant $(r,~1)$ for any $r>0$. We would like to remark that apart from distinguishing Hilbert spaces among Banach spaces, this further substantiates our  geometric motivation for introducing the concept of CPP. 

\section{Main Results}  
Let us begin with a simple proposition which will be used throughout the paper. 
\begin{prop}\label{prop:convexity}
	Let $\mathbb{X}$ be a normed linear space,  $u,~v \in \mathbb{X}$ and   $0< \lambda_0 \in \mathbb{R}.$ If $\|u\|\leq 1$ and $\|u+\lambda_0 v\|\leq 1$ then $\|u+\lambda v\|\leq 1$ for all $0 \leq \lambda \leq \lambda_0$.
\end{prop}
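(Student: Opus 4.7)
The plan is to exploit the convexity of the norm: the vector $u + \lambda v$ should be expressible as a convex combination of the two vectors whose norms we already control, namely $u$ and $u + \lambda_0 v$.

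First I would set $t = \lambda / \lambda_0$, so that $0 \leq t \leq 1$ when $0 \leq \lambda \leq \lambda_0$. Then a direct algebraic check gives the identity
\[
u + \lambda v \;=\; (1-t)\, u \;+\; t\,(u + \lambda_0 v),
\]
since the $u$-terms combine to $(1-t+t)u = u$ and the $v$-terms combine to $t \lambda_0 v = \lambda v$.

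Next I would apply the triangle inequality and positive homogeneity of the norm to this convex combination:
\[
\|u + \lambda v\| \;\leq\; (1-t)\,\|u\| \;+\; t\,\|u + \lambda_0 v\| \;\leq\; (1-t)\cdot 1 + t \cdot 1 \;=\; 1,
\]
using the two hypotheses $\|u\|\leq 1$ and $\|u+\lambda_0 v\|\leq 1$. This completes the argument.

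There is essentially no obstacle here; the only thing to be careful about is the range of $\lambda$, which is $[0, \lambda_0]$ rather than $[-\lambda_0, \lambda_0]$, so one must write $u + \lambda v$ as a convex combination of $u$ and $u + \lambda_0 v$ specifically (not of $u \pm \lambda_0 v$). The statement is really just the observation that the function $\lambda \mapsto \|u + \lambda v\|$ is convex in $\lambda$, so its restriction to the interval $[0,\lambda_0]$ is bounded above by the maximum of its values at the two endpoints.
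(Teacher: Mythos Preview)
Your proof is correct and is precisely the argument the paper has in mind: the paper's own proof simply reads ``Follows easily from the convexity of the norm,'' and you have written out exactly that convex-combination computation.
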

\begin{proof}
	Follows easily from the convexity of the norm.
\end{proof}

In the next proposition we discuss some of the basic properties related to the notions of (weak) CPP.
\begin{prop}\label{proposition:properties}
	Let $ \mathbb{X},~\mathbb{Y} $ be Banach spaces and $x \in S_{\mathbb{X}},~y\in S_{\mathbb{Y}}.$ Then the following are true:\\
(i) $(x,~y)$ is a CPP implies that $(x,~y)$ is a weak CPP, but the converse may not be true.\\
(ii) In a Hilbert space $\mathbb{H}$, $(x,~y)$ is a CPP for any $x,~y \in S_{\mathbb{H}}.$ In this case the CPP constant may be chosen as $ (r,~\mu), $ with any $ r>0 $ and $ \mu =1. $\\
(iii) $(x,~ x)$ is always a weak CPP but $(x,~x)$ may not be a CPP.\\
(iv) $(x,~y)$ may or may not be a weak CPP.\\
(v) If $(r_{0},~\mu_{0})$ is a (weak) CPP constant for the pair $ (x,~y), $ then $ (r,~\mu) $ is a (weak) CPP constant for the pair $ (x,~y), $ for any $ 0 < r \leq r_{0} $ and for any $ 0 < \mu \leq \mu_{0}. $ \\
(vi) If $(x,~y)$ is a CPP and $x$ is not an extreme point of $B_{\mathbb{X}}$ then $y$ is not an extreme point of $B_{\mathbb{Y}}$. But the converse is not true. 
\end{prop}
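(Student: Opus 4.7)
The proposition bundles six essentially independent claims; I would handle them in roughly increasing order of difficulty, reserving (vi) for last. For (i), the forward direction is immediate, since CPP quantifies the implication over every admissible $(z,w)$ and in particular produces the single pair required by weak CPP. A counterexample to the converse would be produced in a smooth Banach space of dimension at least three, where $x^\perp\cap S_{\mathbb{X}}$ is a nontrivial continuum and some $(z,w)$ satisfies (1) while others do not. For (v), monotonicity in $r$ is immediate because shrinking the neighborhood only weakens the hypothesis. For monotonicity in $\mu$, take $(a,b)$ with $ax+bz\in B(x,r)\cap S_{\mathbb{X}}$; Birkhoff--James orthogonality $x\perp_B z$ gives $|a|=\|ax\|\leq\|ax+bz\|=1$, hence $\|ay\|\leq 1$. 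Combining this with $\|ay+b\mu_0 w\|\leq 1$ from the $(r_0,\mu_0)$-CPP assumption, Proposition~\ref{prop:convexity} applied with $u=ay$, $v=bw$, $\lambda_0=\mu_0$, $\lambda=\mu$ yields $\|ay+b\mu w\|\leq 1$.

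For (ii), the Pythagorean identity $\|au+bv\|^2=a^2+b^2$ in a Hilbert space for orthonormal $u,v$ makes $\|ax+bz\|\leq 1$ and $\|ay+bw\|\leq 1$ equivalent, so $\mu=1$ together with any $r>0$ is a CPP constant. The weak CPP claim in (iii) follows by choosing $w=z$ and $\mu=1$, which turns the implication in (1) into the tautology $\|ax+bz\|=1\Rightarrow\|ax+bz\|\leq 1$. The failure of CPP for $(x,x)$ and the negative half of (iv) are witnessed by explicit low-dimensional examples (in $\ell_p^2$ for $p\neq 2$ with $x$ off a symmetry axis, or in a smooth three-dimensional space) where opposite-sign choices of $z$ and $w$ from the orthogonal slices of the unit sphere cannot be simultaneously accommodated by any single pair $(r,\mu)$; the positive half of (iv) is handled by the Hilbert case from (ii).

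The substantive piece is (vi). If $x$ is not extreme, write $x=(x_1+x_2)/2$ with distinct $x_1,x_2\in S_{\mathbb{X}}$; by convexity of the norm the whole segment $[x_1,x_2]$ lies in $S_{\mathbb{X}}$, so the unit vector $z=(x_2-x_1)/\|x_2-x_1\|$ satisfies $\|x+tz\|=1$ on an interval $[-\delta,\delta]$, and a standard convexity argument upgrades this to $x\perp_B z$. Given any $w\in y^\perp\cap S_{\mathbb{Y}}$ and a CPP constant $(r,\mu)$, evaluating (1) at $(a,b)=(1,t)$ for $|t|<\min(r,\delta)$ yields $\|y+t\mu w\|\leq 1$, while Birkhoff--James $y\perp_B w$ forces $\|y+t\mu w\|\geq 1$. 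Hence $\|y+t\mu w\|=1$ on a nontrivial interval, so $y$ lies on a segment in $S_{\mathbb{Y}}$ and is not extreme. For the failure of the converse I would take $\mathbb{X}=\ell_2^2$ (Hilbert, every unit vector extreme) and $\mathbb{Y}=\ell_\infty^2$ with $x=y=(1,0)$; a direct computation shows $y^\perp\cap S_{\mathbb{Y}}=\{(0,\pm 1)\}$ and that $(r,\mu)=(1,1)$ is a CPP constant, while $x$ is extreme in $B_{\ell_2^2}$ and $y$ is not extreme in $B_{\ell_\infty^2}$. The main obstacle I anticipate is not the geometric argument for (vi) itself but the careful verification of the counterexamples in (i), (iii), and (iv), where condition (1) must be checked against the entire admissible family of $(z,w)$ rather than a single convenient pair.
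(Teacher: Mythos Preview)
Your direct arguments for (ii), (v), the positive half of (iii), and the forward implication in (vi) are correct and essentially match the paper's. Your converse counterexample for (vi), with $\mathbb{X}=\ell_2^2$, $\mathbb{Y}=\ell_\infty^2$ and $x=y=(1,0)$, is also correct, though the paper instead stays inside a single space and uses $((1,1),(1,0))$ in $\ell_\infty^2$.

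The genuine gap is in your counterexample scheme for (iii). Your instinct to work in a smooth space such as $\ell_p^2$ (with $p\neq 2$ and $x$ off a symmetry axis) is wrong: in $\ell_4^2$, for instance, the pair $((x_1,x_2),(x_1,x_2))$ is \emph{always} a CPP. This is a consequence of the paper's own Theorem~\ref{theorem:l4}, which says a pair in $\ell_4^2$ fails to be a CPP only when the first point lies on a coordinate axis and the second does not, a condition that can never hold for $(x,x)$. More conceptually, in a two-dimensional smooth space one has $x^\perp\cap S_{\mathbb{X}}=\{\pm z\}$, and the only nontrivial CPP requirement beyond the weak one is $\|ax+bz\|=1\Rightarrow\|ax-b\mu z\|\le 1$; when the sphere is $C^2$ the two branches $b\mapsto a(b)$ and $b\mapsto a(-b)$ agree to second order, so a sufficiently small $\mu$ always works. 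Your alternative suggestion of a ``smooth three-dimensional space'' would likewise need very degenerate curvature behaviour and is far from an explicit example.

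The paper sidesteps all of this by dropping smoothness and working in $\ell_\infty^2$, where the orthogonal complement of a corner point is genuinely large. Its counterexamples are simply: $((1,1),(1,-1))$ is weak CPP but not CPP for (i); $((1,1),(1,1))$ is not CPP for (iii); and $((1,0),(1,1/2))$ versus $((1,0),(1,1))$ for the two halves of (iv). You should either adopt these $\ell_\infty^2$ examples or, if you insist on smooth spaces, be prepared to construct a much more delicate norm than $\ell_p$.
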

\begin{proof}
	$(i)$ First part follows trivially from definition. On the other hand, notice that, 
	in $\ell_{\infty}^{2}$, $((1,1),~(1,-1))$ is a weak CPP but not CPP.\\
	$(ii)$ Let $x,~y \in S_{\mathbb{H}}$. Let $z \in x^{\perp}\cap S_{\mathbb{H}}$ and $w \in y^{\perp}\cap S_{\mathbb{H}}$. Then $\|ax+bz\|=1 \Longrightarrow a^2+b^2=1$. Now, $\|ay+bw\|^2=a^2+b^2=1 \Longrightarrow \|ay+bw\|=1$. Hence, $(x,~y)$ is a CPP.\\
	$(iii)$ From the definition of weak CPP, it is clear that for any $x\in S_{\mathbb{X}}$, $(x,~x)$ is always a weak CPP. But in  $\ell_{\infty}^{2}$, $((1,1),~(1,1))$ is not a CPP.\\
	$(iv) $ Simply observe that in $\ell_{\infty}^{2}$, $((1,0),~(1,1/2)) $ is a weak CPP but $((1,0),~(1,1))$ is not a weak CPP.\\
	$ (v)  $ We give the proof for weak CPP, the proof for CPP follows similarly. Since $(x,~y)$ is a weak CPP with CPP constant $(r_0,~ \mu_0)$ and $0<r \leq r_0$, there exists  $z\in x^{\perp}\cap S_{\mathbb{X}}$ and $w \in y^{\perp}\cap S_{\mathbb{Y}}$, $ax+bz\in B(x,~r)\cap S_{\mathbb{X}}\Longrightarrow ax+bz\in B(x,~r_0)\cap S_{\mathbb{X}}\Longrightarrow \|ay+b \mu_0 w\|\leq 1 $. Since  $0< \mu \leq \mu_0$, using Proposition \ref{prop:convexity}, we have, $\|ay+b\mu w\|\leq 1$. Thus $(x,~y)$ is a weak CPP with weak CPP constant $(r,~\mu)$. \\
	$(vi) $ Let $(x,~y)$ be a CPP with CPP constant $(r,~\mu)$. Since $x$ is not an extreme point of $B_{\mathbb{X}}$, there exists $x_1,~x_2 \in B_{\mathbb{X}}$ such that $x\neq x_1,~x\neq x_2$ and $x=\frac{1}{2}(x_1+x_2)$. It is easy to check that $x \perp_B (x_1-x)$. Clearly, there exists $0<t <1$ such that $x+t(x_1-x)=(1-t)x+tx_1 \in B(x,~r)\cap S_{\mathbb{X}}$ and $x-t(x_1-x)=(1-t)x+tx_2 \in B(x,~r)\cap S_{\mathbb{X}}$. Since $(x,~y)$ is a CPP, for any $w \in y^{\perp}\cap S_{\mathbb{Y}}$, we have, $\|y\pm t \mu w\|\leq 1$. Also $y \perp_B w$ implies that $\|y \pm t \mu w\|\geq 1$ and so $\|y\pm t\mu w\|=1$. Clearly $y=\frac{1}{2}(y+t \mu w)+\frac{1}{2}(y-t \mu w)$. Hence $y$ is not an extreme point of $B_{\mathbb{Y}}$.\\
	But in $\ell_{\infty}^2,~((1,1),~(1,0))$ is a CPP where $(1,0)$ is not an extreme point of $B_{\ell_{\infty}^2}$ and $(1,1)$ is an extreme point of $B_{\ell_{\infty}^2}$.
\end{proof}

In the next theorem, we establish a connection between operator norm attainment and weak CPP, for bounded linear operators between smooth Banach spaces. 
 
\begin{theorem}\label{theorem:weak}
	Let $\mathbb{X},~\mathbb{Y}$ be smooth Banach spaces. Let $T \in \mathbb{L}(\mathbb{X}, \mathbb{Y})$ be such that  $rank~T>1$ and $\|T\|=1$. Let $M_T \neq \emptyset$ and $x \in M_T$. Then  $(x,~ Tx)$ is a weak CPP.
\end{theorem}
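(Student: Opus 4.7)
The plan is to exploit the smoothness of both $\mathbb{X}$ and $\mathbb{Y}$ to show that $T$ sends $x^{\perp}$ into $(Tx)^{\perp}$, and then use the rank condition to pick a workable witness for the weak CPP.

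Write $y = Tx$. Since $\mathbb{Y}$ is smooth, there is a unique support functional $g \in S_{\mathbb{Y}^{*}}$ with $g(y) = 1$, and by the standard James-type characterization of Birkhoff--James orthogonality via support functionals, $(y)^{\perp} = \ker g$. Consider $g \circ T \in \mathbb{X}^{*}$. Then $\|g \circ T\| \leq \|g\|\,\|T\| = 1$ and $(g \circ T)(x) = g(Tx) = 1$, so $g \circ T$ is a norm-one support functional at $x$. Because $\mathbb{X}$ is smooth, this support functional is unique, namely $f_{x}$, and therefore $\ker(g \circ T) = x^{\perp}$. Hence $T(x^{\perp}) \subseteq \ker g = y^{\perp}$.

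Next, I use the rank hypothesis. Since $\mathbb{X} = \mathbb{R} x \oplus x^{\perp}$ (as $x^{\perp}$ is a hyperplane, being the kernel of $f_{x}$), the restriction $T|_{x^{\perp}}$ cannot be the zero map, for otherwise $T\mathbb{X} \subseteq \mathbb{R}\,Tx$, contradicting $\operatorname{rank} T > 1$. So I can select $z \in x^{\perp} \cap S_{\mathbb{X}}$ with $Tz \neq 0$. Define
\[
\mu := \|Tz\| > 0, \qquad w := \tfrac{1}{\mu}\,Tz \;\in\; y^{\perp} \cap S_{\mathbb{Y}},
\]
the membership $w \in y^{\perp}$ being exactly the conclusion of the previous paragraph applied to $z$.

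Finally, verification of the weak CPP condition is immediate and in fact works for any $r > 0$: if $ax + bz \in S_{\mathbb{X}}$, then
\[
\|ay + b\mu w\| \;=\; \|aTx + b\,Tz\| \;=\; \|T(ax + bz)\| \;\leq\; \|T\|\,\|ax + bz\| \;=\; 1.
\]
Thus $(x, Tx)$ is a weak CPP with constant $(r,\mu)$ for every $r > 0$. The only conceptually nontrivial step is the identification $g \circ T = f_{x}$ via the simultaneous use of smoothness on both sides; once that inclusion $T(x^{\perp}) \subseteq (Tx)^{\perp}$ is in hand, the rank argument and the CPP verification are routine.
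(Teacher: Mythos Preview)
Your proof is correct and follows the same overall strategy as the paper: locate $z\in x^{\perp}\cap S_{\mathbb{X}}$ with $Tz\neq 0$, show $Tx\perp_B Tz$, and use $w=Tz/\|Tz\|$ as the witness for the weak CPP. There are two minor differences in execution. First, the paper obtains $Tx\perp_B Tz$ by invoking Theorem~2.2 of \cite{S}, whereas you reprove the needed special case directly via the support-functional argument $g\circ T=f_x$; your argument is self-contained and makes transparent exactly how both smoothness hypotheses enter. Second, the paper argues by contradiction, takes $0<\mu<\|Tz\|$, and appeals to Proposition~\ref{prop:convexity} to bound $\|ax+\tfrac{b\mu}{\|Tz\|}z\|$, while you simply set $\mu=\|Tz\|$ and observe $\|ay+b\mu w\|=\|T(ax+bz)\|\leq 1$ in one line. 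Your route is shorter and avoids the detour through convexity; the paper's route has the (very slight) byproduct of showing that any $\mu\in(0,\|Tz\|]$ works, which is in any case immediate from Proposition~\ref{proposition:properties}(v).
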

\begin{proof}
	If $Tz= 0$ for all $z \in x^{\perp} \cap S_{\mathbb{X}}$ then $rank~T = 1$, a contradiction to our hypothesis that $rank~T>1$. Therefore,  there exists $z \in x^{\perp} \cap S_{\mathbb{X}}$ such that $Tz \neq 0$. Since $\mathbb{X},~\mathbb{Y}$ are smooth, $x \in M_T$ and $x \perp_B z$, from Theorem 2.2 of \cite{S},  we have, $Tx \perp_B Tz$. If possible, suppose that $(x,~Tx)$ is not a weak CPP. Then, for any $r >0$, $0< \mu < \|Tz\|$, there exists $a,~ b \in \mathbb{R}$ such that $a x + b z \in B(x,~r) \cap S_{\mathbb{X}}$ but $\|a Tx + b \mu \frac{Tz}{\|Tz\|}\|>1. $ In other words, $ \|Tw\|>1 $, where $w= a x+ \frac{b \mu}{\|Tz\|}z $. Since $ x \perp_{B} z, $ it follows that $|a|\leq \|a x + b z\|=1$. Since $0< \frac{\mu}{\|Tz\|}<1$, using Proposition \ref{prop:convexity}, we have $\|w\|\leq 1$. Now, $\|Tw\|> 1$ and $\|w\|\leq 1$ together imply that $\|T\|> 1$, a contradiction to our assumption that $\|T\|=1$. Therefore, $(x,~Tx)$ is a weak CPP. This completes the proof of the theorem.
\end{proof}
\begin{cor}\label{cor:invertible}
	Let $\mathbb{X}$ be a reflexive, smooth Banach space and $\mathbb{Y}$ be a smooth Banach space. Let $T \in \mathbb{K}(\mathbb{X},\mathbb{Y})$ be a norm one linear operator such that $T$ is invertible. Then $M_T \neq \emptyset$. Moreover, for each $x \in M_T, $  $(x,~Tx)$ is a weak CPP.
\end{cor}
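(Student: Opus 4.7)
My plan is to verify the two assertions separately: first show $M_T \neq \emptyset$ by exploiting compactness together with reflexivity, and then invoke Theorem~\ref{theorem:weak} to conclude that $(x,Tx)$ is a weak CPP for every $x \in M_T$.

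For the norm attainment claim, I would pick a sequence $\{x_n\} \subset S_{\mathbb{X}}$ with $\|T x_n\| \to \|T\| = 1$. Since $\mathbb{X}$ is reflexive, $B_{\mathbb{X}}$ is weakly compact, so after passing to a subsequence I may assume $x_n \rightharpoonup x$ for some $x \in B_{\mathbb{X}}$. Compactness of $T$ converts weak convergence in the domain into norm convergence in the range: $T x_n \to T x$ in $\mathbb{Y}$, and hence $\|T x\| = \lim \|T x_n\| = 1$. The inequality $\|T x\| \leq \|T\|\,\|x\|$ then forces $\|x\| \geq 1$, which combined with $x \in B_{\mathbb{X}}$ gives $\|x\| = 1$. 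Thus $x \in M_T$, so $M_T \neq \emptyset$.

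For the weak CPP assertion, the key point is that $T$ being invertible together with the standing assumption $\dim \mathbb{X} > 1$ forces $\mathrm{rank}\, T > 1$. Consequently, for any $x \in M_T$, every hypothesis of Theorem~\ref{theorem:weak} is satisfied, and the conclusion that $(x,Tx)$ is a weak CPP follows directly from that theorem.

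I do not anticipate any serious obstacle here: the first part is the standard argument that a compact operator on a reflexive space attains its norm, and the second part is a straightforward appeal to Theorem~\ref{theorem:weak}. The only mildly delicate issue is the precise meaning of \emph{invertible} in this context (bounded inverse versus merely injective, noting that a compact invertible operator forces $\mathbb{X}$ to be finite-dimensional), but any reasonable reading gives $\mathrm{rank}\,T > 1$, which is all that is needed.
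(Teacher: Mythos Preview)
Your proposal is correct and follows essentially the same approach as the paper: the paper simply asserts that ``every compact operator on a reflexive Banach space attains norm'' without proof, whereas you spell out the standard weak-compactness argument, and both proofs then invoke Theorem~\ref{theorem:weak} via the observation that invertibility forces $\mathrm{rank}\,T>1$. Your parenthetical remark about the meaning of ``invertible'' is a fair observation but, as you note, immaterial to the argument.
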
	
\begin{proof}
	Since every compact operator on a reflexive Banach space attains norm, it follows that $M_T \neq \emptyset$. Since $T$ is invertible, it follows from Theorem \ref{theorem:weak}, that $(x,~Tx)$ is a weak CPP for each $x \in M_T. $ 
\end{proof}

We have already observed in Proposition \ref{proposition:properties} that given any $ x \in S_{\mathbb{X}}, $ $ (x,~ x) $ is a weak CPP. In the next Theorem, we establish the existence of nontrivial weakly compatible point pair(s) in any finite-dimensional smooth Banach space.

\begin{theorem}
	Let $\mathbb{X}$ be a finite-dimensional smooth Banach space. Then there exists $x,~y \in S_{\mathbb{X}}$, such that $y\neq \pm x$ and $(x,~y)$ is a weak CPP.
\end{theorem}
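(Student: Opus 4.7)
The plan is to invoke Theorem~\ref{theorem:weak}: it suffices to produce a norm-one operator $T \in \mathbb{L}(\mathbb{X},\mathbb{X})$ of rank at least two that attains its norm at some $x \in S_{\mathbb{X}}$ with $Tx \neq \pm x$, because Theorem~\ref{theorem:weak} then directly yields that $(x,\,Tx)$ is a weak CPP, giving the desired pair $x$, $y := Tx$ with $y \neq \pm x$. In the finite-dimensional setting the norm-attainment hypothesis is automatic, since every bounded linear operator attains its norm on the compact sphere $S_{\mathbb{X}}$; we therefore need only arrange the rank, the norm, and the condition $Tx \neq \pm x$.

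For the construction, I would fix $x_0 \in S_{\mathbb{X}}$ and let $f_{x_0} \in S_{\mathbb{X}^*}$ denote its unique supporting functional, well defined by the smoothness of $\mathbb{X}$. Since $\dim\mathbb{X} \geq 2$, $\ker f_{x_0}$ is a nontrivial hyperplane, so I may pick $y_0 \in \ker f_{x_0} \cap S_{\mathbb{X}}$; note $y_0 \neq \pm x_0$ automatically, since $f_{x_0}(y_0) = 0 \neq \pm 1 = f_{x_0}(\pm x_0)$. Choose also a nonzero $g \in \mathbb{X}^*$ with $g(x_0) = 0$ (which exists because $\dim \mathbb{X}^* \geq 2$), and for $\delta > 0$ put
\[
T_\delta \;:=\; f_{x_0} \otimes y_0 \;+\; \delta\,(g \otimes x_0).
\]
Then $T_\delta x_0 = y_0$, the image of $T_\delta$ equals $\mathrm{span}(y_0, x_0)$, which is two-dimensional since $y_0 \in \ker f_{x_0}$ while $x_0 \notin \ker f_{x_0}$; so $\mathrm{rank}(T_\delta) = 2$. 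Moreover $\|T_\delta\| \to \|f_{x_0} \otimes y_0\| = 1$ as $\delta \to 0^+$. Normalizing, $\widetilde T_\delta := T_\delta / \|T_\delta\|$ is a norm-one rank-two operator on $\mathbb{X}$.

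Pick $x_\delta \in M_{\widetilde T_\delta}$, which is nonempty by compactness. I claim that for $\delta > 0$ sufficiently small, $\widetilde T_\delta x_\delta \neq \pm x_\delta$. As $\delta \to 0^+$, $\widetilde T_\delta \to A := f_{x_0} \otimes y_0$ in operator norm, and a standard upper semi-continuity argument (using compactness of $S_{\mathbb{X}}$ and continuity of the operator norm) shows that any accumulation point $x^*$ of $\{x_\delta\}$ lies in $M_A = \{v \in S_{\mathbb{X}} : |f_{x_0}(v)| = 1\}$. Passing to a subsequence, $x_\delta \to x^*$ and $\widetilde T_\delta x_\delta \to A x^* = f_{x_0}(x^*) y_0 = \pm y_0$. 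Since $y_0 \in \ker f_{x_0}$ but $x^* \notin \ker f_{x_0}$ (as $|f_{x_0}(x^*)| = 1$), the sets $\{\pm y_0\}$ and $\{\pm x^*\}$ are disjoint; therefore $\widetilde T_\delta x_\delta \neq \pm x_\delta$ for all sufficiently small $\delta > 0$. Theorem~\ref{theorem:weak} applied to $\widetilde T_\delta$ at $x = x_\delta$ then delivers that $(x_\delta,\,\widetilde T_\delta x_\delta)$ is a weak CPP with $\widetilde T_\delta x_\delta \neq \pm x_\delta$, which is exactly the pair sought.

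The main obstacle is the upper semi-continuity step, namely controlling how the norm-attainment sets $M_{\widetilde T_\delta}$ cluster near $M_A$ as $\delta \to 0^+$. Smoothness of $\mathbb{X}$ enters in two critical ways: it guarantees the unique supporting functional $f_{x_0}$ that drives the construction, and it ensures that the limit rank-one operator $A = f_{x_0} \otimes y_0$ has a cleanly describable norm-attainment set $M_A$, which is what makes the disjointness argument $\{\pm y_0\} \cap \{\pm x^*\} = \emptyset$ transparent.
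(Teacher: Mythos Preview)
Your proof is correct and follows a genuinely different path from the paper's. Both arguments ultimately invoke Theorem~\ref{theorem:weak}, so the whole task reduces to exhibiting a norm-one operator of rank $\geq 2$ that attains its norm at some $x$ with $Tx \neq \pm x$; the difference lies in how that operator is built and how $Tx \neq \pm x$ is verified. The paper splits into cases by dimension: in dimension two it writes down a rotation-type map with no real eigenvalues, and in dimension $\geq 3$ it invokes the existence of an isosceles-orthogonal pair (a nontrivial external result) to manufacture an invertible operator whose only real eigenvalue has modulus $<1$, so that $Tx = \pm x$ is excluded by direct spectral inspection. Your construction is dimension-uniform: you perturb a rank-one operator $f_{x_0}\otimes y_0$ to rank two and use a compactness/limit argument on the norm-attainment sets $M_{\widetilde T_\delta}$ to rule out $\widetilde T_\delta x_\delta = \pm x_\delta$ for small $\delta$. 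Your route avoids the case split and the isosceles-orthogonality input, at the price of a (routine) limiting argument; the paper's route is more explicit and avoids any limiting step. One small remark: in your final paragraph you locate the role of smoothness in the uniqueness of $f_{x_0}$ and in the description of $M_A$, but neither of those actually requires smoothness (any supporting functional works, and $M_A=\{v:|f_{x_0}(v)|=1\}$ holds in general); smoothness is genuinely needed only to invoke Theorem~\ref{theorem:weak}.
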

\begin{proof}
	Let us first suppose that $\mathbb{X}$ is two-dimensional. Let $\{x_1,x_2\}$ be a basis of $\mathbb{X}$. Define $S\in \mathbb{L}(\mathbb{X})$ by $Sx_1= x_2$ and $Sx_2= - x_1$. Clearly, $S$ has no real eigenvalue. Let $T= \frac{1}{\|S\|}S$. Clearly, $T$ is invertible and $T$ has no real eigenvalue. Let $x\in M_T$. Then by Corollary \ref{cor:invertible}, we have, $(x,~Tx)$ is a weak CPP. Clearly, $Tx \neq \pm x$, since otherwise, $\pm 1$ will be an eigenvalue  of $T$, a contradiction to the fact that $T$ has no real eigenvalue. This completes the proof of the theorem in the special case of $ \mathbb{X} $ being two-dimensional.\\
	Now, let $\mathbb{X}$ be n-dimensional ($n \geq 3$). By Theorem 4.33 of \cite{AMW}, there exist two linearly independent vectors $x_1,~x_2$ in $\mathbb{X}$ such that $\|x_1+ x_2\|=\|x_1-x_2\|$, i.e., $x_1,~ x_2$ are isosceles orthogonal. Let $\{x_1,~x_2,~x_3, \ldots, x_n\}$ be a basis of $\mathbb{X}$. Define $S\in \mathbb{L}(\mathbb{X})$ as follows:
	\begin{eqnarray*}
	Sx_1&=& x_2,\\
	Sx_2&=&-x_1,\\
	Sx_i&=&\frac{1}{2}x_i, ~ \forall ~3 \leq i \leq n.
	\end{eqnarray*}
	Clearly, $S$ is invertible and $\frac{1}{2}$ is the only real eigenvalue of $S$. Now, $S(x_1+x_2)=x_2-x_1 \Longrightarrow \|S(\frac{x_1+x_2}{\|x_1+x_2\|})\|=\frac{\|x_2-x_1\|}{\|x_1+x_2\|}=1 \Longrightarrow \|S\|\geq 1$. Let $T= \frac{1}{\|S\|}S$. Then $\frac{1}{2\|S\|}$ is the only real eigenvalue of $T$ and $0< \frac{1}{2\|S\|} \leq \frac{1}{2}<1$. Let $x\in M_T$. Applying Corollary \ref{cor:invertible}, we have, $(x,~ Tx)$ is a weak CPP. Clearly, $Tx \neq \pm x$, since otherwise, $\pm 1$ will be an eigenvalue  of $T$, a contradiction. This proves the theorem.
\end{proof}

In a Hilbert space $ \mathbb{H}, $  a bounded linear operator in $ \mathbb{L}(\mathbb{H}) $ is an extreme contraction if and only if it is an isometry or a coisometry \cite{Ga,K}. In particular, there does not exist a rank one bounded linear operator in $ \mathbb{L}(\mathbb{H}) $ which is an extreme contraction. However, the scenario is dramatically different in case of bounded linear operators between general Banach spaces, which are not necessarily Hilbert spaces. Indeed, it follows from the works of Grzaslewich \cite{G} that there exists rank one extreme contractions in $ \mathbb{L}(\ell_{p}^{2}). $ In the next theorem, we give a sufficient condition for a rank one bounded linear operator from a smooth Banach space to a strictly convex and smooth Banach space to be an extreme contraction.

\begin{theorem}
	Let $\mathbb{X}$ be a smooth Banach space and $\mathbb{Y}$ be a strictly convex and smooth Banach space. Let $T\in \mathbb{L}(\mathbb{X}, \mathbb{Y})$ be such that $rank~T=1$ and $\|T\|=1$. If $(x, ~Tx)$ is not a weak CPP for some $x \in M_T,$ then $T$ must be an extreme contraction.
\end{theorem}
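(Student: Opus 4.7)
The plan is to argue by contraposition: assume $T$ is not an extreme contraction, write $T = \tfrac{1}{2}(A+B)$ with $A,B \in \mathbb{L}(\mathbb{X},\mathbb{Y})$, $\|A\|,\|B\|\leq 1$ and $A\neq B$, and show that $(x,Tx)$ must be a weak CPP for \emph{every} $x \in M_T$.

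The first step is to use the strict convexity of $\mathbb{Y}$: from $Tx = \tfrac{1}{2}(Ax+Bx)$ with $\|Tx\|=1$ and $\|Ax\|,\|Bx\|\leq 1$, one concludes $Ax = Bx = Tx$. In particular $\|A\|=\|B\|=1$ and $x\in M_A\cap M_B$. Next, I would identify $\ker T$ with $x^{\perp}$. Since $\mathbb{X}$ is smooth, $x^{\perp}$ is a closed hyperplane; for any $z\in x^{\perp}$, Theorem 2.2 of \cite{S} applied to $T$ gives $Tx\perp_{B}Tz$, and because $\mathrm{rank}\,T=1$ forces $Tz=\lambda Tx$, the relation $Tx\perp_B\lambda Tx$ with $Tx\neq 0$ yields $\lambda=0$. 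Thus $x^{\perp}\subseteq \ker T$, and since both are closed hyperplanes, $x^{\perp}=\ker T$.

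The next step is to produce the witness pair $(z,w)$. Since $A\neq B$, pick $u$ with $Au\neq Bu$ and decompose $u = \alpha x + z'$ with $z'\in\ker T=x^{\perp}$; using $Ax=Bx$ we get $Az'\neq Bz'$, so after normalization there is $z\in x^{\perp}\cap S_{\mathbb{X}}$ with $Az\neq Bz$. Because $Az+Bz=2Tz=0$, we have $Bz=-Az$, and $w':=Az\neq 0$. Now apply Theorem 2.2 of \cite{S} to $A$, valid because $x\in M_A$ and both spaces are smooth, to obtain $Tx=Ax\perp_{B}Az=w'$. Set $w=w'/\|w'\|\in (Tx)^{\perp}\cap S_{\mathbb{Y}}$ and $\mu=\|w'\|>0$.

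Finally, the CPP inequality is immediate from $\|A\|=1$: for any $a,b\in\mathbb{R}$ with $\|ax+bz\|=1$,
\[
\|a\,Tx + b\,\mu\, w\| \;=\; \|a\,Ax + b\,Az\| \;=\; \|A(ax+bz)\| \;\leq\; 1.
\]
Since this holds for all such $(a,b)$, it holds a fortiori for those with $ax+bz\in B(x,r)\cap S_{\mathbb{X}}$, for any $r>0$; hence $(x,Tx)$ is a weak CPP, contradicting the hypothesis. The most delicate step is the identification $x^{\perp}=\ker T$: it needs both the smoothness of $\mathbb{X}$, so that $x^{\perp}$ is a genuine linear (closed) hyperplane, and the rank-one hypothesis, so that the Sain orthogonality relation upgrades from $Tx\perp_B Tz$ to $Tz=0$. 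Once that identification is secured, the remainder merely packages the inequality $\|A\|\leq 1$ into the format required by the definition of weak CPP.
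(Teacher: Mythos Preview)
Your proof is correct and follows essentially the same route as the paper: assume $T$ is not extreme, write $T=\tfrac12(A+B)$, use strict convexity of $\mathbb{Y}$ to get $Ax=Bx=Tx$, use Theorem~2.2 of \cite{S} (first for $T$ to see $x^{\perp}\subseteq\ker T$, then for $A$ to see $Ax\perp_B Az$), and read off the weak CPP witness $(z,w,\mu)$ from the nonzero value $Az$. The only cosmetic differences are that you argue contrapositively and make the identification $x^{\perp}=\ker T$ explicit as an equality of closed hyperplanes, whereas the paper phrases the same steps as a direct contradiction and leaves that identification implicit.
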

\begin{proof}
	Since $x\in M_T$ and $rank~T=1$, applying Theorem $ 2.2 $ of \cite{S}, we have, $Ty=0$ for all $y \in x^{\perp} \cap S_{\mathbb{X}}$. If possible, suppose that $T$ is not an extreme contraction. Then there exists $T_1,~ T_2 \in \mathbb{L}(\mathbb{X}, \mathbb{Y})$ such that $T= \frac{1}{2}T_1+ \frac{1}{2}T_2$, where $T_1 \neq T,~ T_2 \neq T,~ \|T_1\|\leq 1$ and $\|T_2\|\leq 1$. Now, $Tx= \frac{1}{2}T_1 x+ \frac{1}{2}T_2 x$ implies that $T_1 x= T_2 x= Tx$, since $\mathbb{Y}$ is strictly convex. Thus, $x\in M_{T_1}\cap M_{T_2}$. Clearly, there exists $y\in x^{\perp}\cap S_{\mathbb{X}}$ such that $T_1 y\neq 0$, otherwise, $T_1$ will be equal to $T$, a contradiction. Since $ \mathbb{X},~\mathbb{Y} $ are smooth, $x \in M_{T_1}$ and  $y\in x^{\perp}\cap S_{\mathbb{X}}$, applying Theorem 2.2 of \cite{S}, we have, $T_1 x \perp_B T_1 y$, i.e., $Tx \perp_B T_1 y$. Therefore, $T_1 y= \mu w$ for some $w\in (Tx)^{\perp}\cap S_{\mathbb{Y}}$ and for some nonzero real number $\mu$.  Now, $Ty= \frac{1}{2}T_1 y+ \frac{1}{2}T_2 y$ implies that $T_2 y= -\mu w$. Without loss of generality assume that $\mu >0$. Now, since $(x,~ Tx)$ is not a weak CPP, for $r >0,~ \mu>0$ there exists $a,~b \in \mathbb{R}$ such that $ax+ by \in B(x,~r)\cap S_{\mathbb{X}}$ but $\|aTx+b \mu w\|>1. $ Therefore, we have, $ \|aT_1 x+ b T_1 y\|>1 \Longrightarrow \|T_1(ax+by)\|>1.$ Since $ \| ax+by \| =1, $ we must have, $ \| T_1 \| > 1, $ a contradiction to our assumption that $\|T_1\|\leq 1$. Therefore, $T$ must be an extreme contraction. This completes the proof of the theorem. 
\end{proof}

In case, $\mathbb{X}$ is a two-dimensional smooth Banach space and  $ \mathbb{Y}$ is a two-dimensional strictly convex and smooth Banach space,  the above theorem can be substantially strengthened to obtain the following theorem:

\begin{theorem}\label{theorem:2dimrank1}
Let $\mathbb{X},~\mathbb{Y}$ be two-dimensional smooth Banach spaces and in addition, $\mathbb{Y}$ be  strictly convex. Let $ T$ be a rank one operator from $\mathbb{X}$ to $\mathbb{Y}$ with $\|T\|=1$. If  $(x,~ Tx)$ is not a CPP for some $x \in M_T$, then $T$ is an extreme contraction.
\end{theorem}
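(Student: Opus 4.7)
The plan is to argue by contradiction, assuming $T$ is not extreme and deriving that $(x,~Tx)$ must be a CPP, contradicting the hypothesis. Write $T= \frac{1}{2}(T_1+ T_2)$ with $T_1,T_2 \in \mathbb{L}(\mathbb{X},\mathbb{Y})$, $\|T_1\|,\|T_2\|\leq 1$ and $T_1,T_2 \neq T$. Exactly as in the proof of the preceding theorem, strict convexity of $\mathbb{Y}$ together with $\|Tx\|=1$ forces $T_1 x = T_2 x = Tx$, so $x \in M_{T_1}\cap M_{T_2}$.

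Next I would exploit two-dimensionality and smoothness to pin down the action on the orthogonal direction. Since $\mathbb{X}$ is two-dimensional and smooth, $x^{\perp} \cap S_{\mathbb{X}}$ consists of a single antipodal pair $\{y,-y\}$; likewise, by smoothness of $\mathbb{Y}$, $(Tx)^{\perp} \cap S_{\mathbb{Y}} = \{w,-w\}$. Applying Theorem 2.2 of \cite{S} to each $T_i$ at $x \in M_{T_i}$ with $x \perp_B y$ gives $Tx = T_i x \perp_B T_i y$, hence $T_i y = \mu_i w$ for some $\mu_i \in \mathbb{R}$. Neither $\mu_i$ can vanish, for then $T_i$ would agree with $T$ on the basis $\{x,y\}$, contradicting $T_i \neq T$. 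Since $T$ has rank one with $x \in M_T$, Theorem 2.2 of \cite{S} also gives $Ty = 0$, so averaging forces $\mu_2 = -\mu_1$; without loss of generality set $\mu := \mu_1 > 0$, so that $T_1 y = \mu w$ and $T_2 y = -\mu w$.

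The contradiction will come from using both $T_1$ and $T_2$ simultaneously as contractions. For any scalars $a,b$ with $ax+by \in S_{\mathbb{X}}$, the inequalities $\|T_i(ax+by)\| \leq 1$ translate into
\[
\|a\,Tx + b\mu w\| \leq 1 \quad \text{and} \quad \|a\,Tx - b\mu w\| \leq 1.
\]
Now choose any $r > 2$ so that $B(x,r)\cap S_{\mathbb{X}} = S_{\mathbb{X}}$. The two displayed inequalities, combined with the symmetry $y \leftrightarrow -y$ (which simply replaces $b$ by $-b$), cover all four sign choices $z \in \{\pm y\}$, $w' \in \{\pm w\}$ appearing in the universal quantifier of the CPP definition. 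Hence $(x,~Tx)$ is a CPP with constant $(r,\mu)$, contradicting the hypothesis and forcing $T$ to be an extreme contraction.

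The main (and essentially only) obstacle is recognizing that the extremality decomposition \emph{automatically} produces both sign-variants of the perpendicular $w$, through $T_1 y = \mu w$ and $T_2 y = -\mu w$, and that the two-dimensional smooth setting is exactly what makes these two pairs exhaust the full universal quantifier in the definition of CPP. This is why the hypothesis can be weakened from ``not a weak CPP'' (as in the preceding theorem) to ``not a CPP'' under the additional assumption $\dim \mathbb{X} = \dim \mathbb{Y} = 2$.
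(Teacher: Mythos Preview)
Your argument is correct and follows essentially the same route as the paper's: both assume $T$ is not extreme, use strict convexity of $\mathbb{Y}$ to force $T_1x=T_2x=Tx$, invoke Theorem~2.2 of \cite{S} together with two-dimensional smoothness to pin down $T_1y=\mu w$ and $T_2y=-\mu w$, and then exploit the contraction inequalities $\|T_i(ax+by)\|\leq 1$ against the non-CPP hypothesis. The only cosmetic difference is that you verify the CPP property directly for a suitable $(r,\mu)$, whereas the paper negates CPP to produce a point at which some $\|T_i\|>1$; these are the two directions of the same implication (one minor reordering: establish $Ty=0$ \emph{before} arguing $\mu_i\neq 0$, since the latter relies on the former).
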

\begin{proof}
 If possible, suppose that $T$ is not an extreme contraction. Then there exists $T_1,~T_2 \in \mathbb{L}(\mathbb{X}, \mathbb{Y})$ such that $T=\frac{1}{2}(T_1+T_2)$, $T \neq T_1,~T\neq T_2,~\|T_1\|=\|T_2\|=1$. Clearly, $Tx=  \frac{1}{2}(T_1 x+T_2 x)$. Since $Tx$ is an extreme point of $ B_{\mathbb{Y}} $, we have, $Tx=T_1 x= T_2 x$, i.e., $x \in M_{T_1} \cap M_{T_2}$. Therefore, applying Theorem $ 2.2 $ of \cite{S}, we have, $x\perp_B y \Longrightarrow T_1 x \perp_B T_1 y \Longrightarrow Tx \perp_B T_1y$. Now, since $rank~T=1$ and $x \perp_B y, $ we must have, $ Ty=0. $ Since $ T_1 \neq T, $ it is easy to deduce that $T_1 y \neq 0$. Therefore, $T_1 y = \mu w, ~T_2 y=- \mu w$ for some $\mu \in \mathbb{R} \setminus \{0\}$ and $w \in (Tx)^{\perp} \cap S_{\mathbb{Y}}$. Without any loss of generality, let us assume that $\mu >0$. Let us also observe that since $ \mathbb{Y} $ is smooth, if $ u \in S_{\mathbb{X}} $ is such that $ Tx \perp_{B} u $ then $ u = \pm w. $ Now, since $(x,~Tx)$ is not a CPP, for $r>0,~\mu >0,$ there exists $a, ~b \in \mathbb{R}$ such that $ax+ by \in B(x,~r) \cap S_{\mathbb{X}}$ but either $\|aTx+ b \mu w\|>1$ or $\|aTx-b \mu w\|>1.$ In the first case, $\|T_1(ax+by)\|>1$ and in the second case, $\|T_2(ax+by)\|>1$. Since $ \| ax+by \| = 1, $ we have, either $ \| T_1 \| > 1 $ or $ \| T_2 \| > 1. $ However, this contradicts our initial assumption that $\|T_1\|=\|T_2\|=1$.  Therefore, $T$ must be an extreme contraction. This completes the proof of the theorem.
\end{proof}

The restriction on the dimension of the domain space $\mathbb{X}$ (codomain space $ \mathbb{Y})$ can be relaxed if the corresponding norm on $\mathbb{X}$ (codomain space $ \mathbb{Y}) $ is induced by an inner product. We observe that in a Hilbert space $\mathbb{H},$ if $x,~y,~u,~v \in S_{\mathbb{H}}$ are such that $x \perp y$ and $u \perp v$ then $\|ax+by\|^2=\|au+bv\|^2 = a^2 + b^2$. We now state the following theorem whose proofs follow in the same line of arguments of Theorem \ref{theorem:2dimrank1}.

\begin{theorem}\label{theorem:hilbertrank1}
	Let $\mathbb{H}$ be any Hilbert space and $\mathbb{X},\mathbb{Y}$ be  two-dimensional strictly convex smooth Banach spaces. Let $ T$ be a rank one operator from $\mathbb{H}$ to $\mathbb{Y}$ (from $\mathbb{X}$ to $\mathbb{H}$) with $\|T\|=1$.  If  $(x,~ Tx)$ is not a CPP for $x \in M_T$, then $T$ is an extreme contraction.
\end{theorem}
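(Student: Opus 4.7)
My plan is to mimic, in each of the two configurations, the argument of Theorem~\ref{theorem:2dimrank1}, using the parallelogram-type identity available in a Hilbert space to compensate for the one direction that is no longer controlled by a two-dimensionality assumption. In both cases I argue by contradiction: write $T = \tfrac{1}{2}(T_{1} + T_{2})$ with $\|T_{i}\| \leq 1$ and $T_{i} \neq T$, deduce from strict convexity of the codomain that $T_{1}x = T_{2}x = Tx$ so that $x \in M_{T_{1}} \cap M_{T_{2}}$, and then combine Theorem~$2.2$ of \cite{S} with smoothness of both spaces to conclude $T_{1}z \perp_{B} Tx$ for every $z \in x^{\perp}$. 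Since $\operatorname{rank} T = 1$ forces $Tz = 0$ on the kernel hyperplane, I also get $T_{2}z = -T_{1}z$ there.

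For the case $T \in \mathbb{L}(\mathbb{X}, \mathbb{H})$: since $\dim \mathbb{X} = 2$ and $\mathbb{X}$ is smooth, $x^{\perp} \cap S_{\mathbb{X}} = \{\pm y\}$, and $T_{1}y \neq 0$ (otherwise $T_{1}$ agrees with $T$ on the basis $\{x, y\}$, contradicting $T_{1} \neq T$). Write $T_{1}y = \nu w_{1}$ with $\nu > 0$ and $w_{1} \in (Tx)^{\perp} \cap S_{\mathbb{H}}$. The key observation, flagged in the remark preceding the theorem, is that in the Hilbert codomain $\|aTx + b\nu w\|_{\mathbb{H}}^{2} = a^{2} + b^{2}\nu^{2}$ for every $w \in (Tx)^{\perp} \cap S_{\mathbb{H}}$; hence applying non-CPP with constants $(r, \nu)$ produces $a, b$ with $\|T_{1}(ax+by)\|_{\mathbb{H}} = \sqrt{a^{2} + b^{2}\nu^{2}} > 1 = \|ax+by\|_{\mathbb{X}}$, contradicting $\|T_{1}\| \leq 1$.

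For the case $T \in \mathbb{L}(\mathbb{H}, \mathbb{Y})$: now $\dim \mathbb{Y} = 2$ and smoothness of $\mathbb{Y}$ make $(Tx)^{\perp} \cap S_{\mathbb{Y}} = \{\pm w\}$, so $T_{1}|_{x^{\perp}}$ lands in $\mathbb{R}w$ and can be written as $T_{1}z = \phi(z)\,w$ for a bounded linear functional $\phi$ on the Hilbert subspace $x^{\perp}$. The obstruction here is that the 2D argument's single orthogonal direction $y$ no longer exists. The remedy, and the main new ingredient, is to invoke Riesz representation on $x^{\perp}$ to obtain $\tilde{z} \in x^{\perp} \cap S_{\mathbb{H}}$ with $\phi(\tilde{z}) = \|\phi\| =: \nu$; since $T_{1} \neq T$ forces $\phi \not\equiv 0$, one has $\nu > 0$. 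Applying non-CPP with $(r, \nu)$ yields $a, b, z, v$ with $v \in \{\pm w\}$ and $\|aTx + b\nu v\|_{\mathbb{Y}} > 1$; choosing the sign $\epsilon$ with $v = \epsilon w$, the vector $ax + \epsilon b\tilde{z}$ has unit norm by the parallelogram identity in $\mathbb{H}$, while $T_{1}(ax + \epsilon b \tilde{z}) = aTx + b\nu v$ has norm strictly greater than $1$, forcing $\|T_{1}\| > 1$ and contradicting the assumption.

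The hardest step is the second case: the infinite-dimensionality of $x^{\perp}$ severs the direct analogue of the 2D argument, and the Riesz representation step, combined with $\|ax \pm b\tilde{z}\|_{\mathbb{H}}^{2} = a^{2} + b^{2}$, is precisely what is required to transfer the failure of CPP into a genuine operator-norm violation for $T_{1}$. The first case is technically easier, but it is conceptually instructive in that the Hilbert-space identity removes the dependence on $w$ on the codomain side, which is exactly the feature that fails when the codomain is merely a two-dimensional strictly convex smooth space.
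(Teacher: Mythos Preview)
Your proof is correct and follows essentially the same strategy as the paper, which merely remarks that the argument of Theorem~\ref{theorem:2dimrank1} carries over once one exploits the Hilbert identity $\|au+bv\|^{2}=a^{2}+b^{2}$ for orthonormal $u,v$. In the $\mathbb{H}\to\mathbb{Y}$ case your Riesz-representation step is a mild variant: it suffices to pick any $y\in x^{\perp}\cap S_{\mathbb{H}}$ with $T_{1}y=\mu w$, $\mu>0$, apply non-CPP with that $\mu$, and then use $\|ax+by\|=\|ax+bz\|=1$ to evaluate $T_{1}$ (or $T_{2}$) at $ax+by$; your choice of the norming vector $\tilde z$ achieves the same end and has the pleasant feature of forcing the contradiction onto $T_{1}$ alone. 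One small notational slip: Theorem~2.2 of \cite{S} yields $Tx\perp_{B}T_{1}z$, not $T_{1}z\perp_{B}Tx$ as you wrote, but the conclusion you draw (that $T_{1}z\in\mathbb{R}w$) uses the correct direction.
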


In the following theorem, in a smooth and  reflexive Banach space with Kadets-Klee property, we give a sufficient condition for a rank one bounded linear operator $ T $ to be such that $ T $ is not an extreme contraction. 

\begin{theorem}\label{theorem:rank1}
  Let $\mathbb{X}$ be a smooth reflexive Banach space with Kadets-Klee property and $\mathbb{Y}$ be a smooth Banach space. Let $ T$ be a rank one operator from $\mathbb{X}$ to $\mathbb{Y}$ with $\|T\|=1$.  If $M_T=\{\pm x\}$  and $(x,~ Tx)$ is a CPP, then $T$ is not an extreme contraction.
\end{theorem}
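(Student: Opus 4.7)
The plan is to produce $T_1,T_2\in\mathbb{L}(\mathbb{X},\mathbb{Y})$ with $T=\tfrac{1}{2}(T_1+T_2)$, $\|T_i\|\leq 1$, and $T_i\neq T$, witnessing that $T$ is not extreme. A first observation is that $\ker T = x^{\perp}$: since $\mathrm{rank}\,T=1$ and $x\in M_T$, Theorem 2.2 of \cite{S} gives $Tx\perp_B Tz$ for every $z\in x^\perp$; but $Tz$ is a scalar multiple of $Tx$, and $Tx\perp_B\lambda Tx$ forces $\lambda=0$, so $Tz=0$, and the two hyperplanes $\ker T$ and $x^\perp$ coincide. Fix a CPP constant $(r,\mu)$ for $(x,Tx)$, choose $w_0\in(Tx)^{\perp}\cap S_{\mathbb{Y}}$ (possible since $\mathbb{Y}$ is smooth), and pick a bounded functional $g:\mathbb{X}\to\mathbb{R}$ with $g(x)=0$ and $\|g\|=1$. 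The perturbation will be $Su:=\mu' g(u)w_0$ for a small $\mu'>0$ to be chosen, with $T_1:=T+S$ and $T_2:=T-S$; since $g\neq 0$ and $\mu'>0$, $S\neq 0$ and $T_i\neq T$.

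The crux is a uniform gap estimate: there exists $\epsilon>0$ such that $\|Tu\|\leq 1-\epsilon$ for every $u\in S_{\mathbb{X}}$ with $\min(\|u-x\|,\|u+x\|)\geq r$. I would prove this by contradiction. If it fails, pick $u_n$ in this set with $\|Tu_n\|\to 1$. By reflexivity, a subsequence satisfies $u_n\rightharpoonup u_0\in B_{\mathbb{X}}$. Since $T$ is of rank one and hence compact, $Tu_n\to Tu_0$ in norm, giving $\|Tu_0\|=1$ and therefore $\|u_0\|=1$. The Kadets--Klee property upgrades this to $u_n\to u_0$ in norm, so the closed constraint $\min(\|u_0\pm x\|)\geq r>0$ passes to the limit, contradicting $u_0\in M_T=\{\pm x\}$. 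I expect this to be the main obstacle, because it is the single step where reflexivity, Kadets--Klee, compactness of $T$, and $M_T=\{\pm x\}$ all enter at once; relaxing any one of these would break the argument.

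With the gap secured, fix $\mu'\in(0,\min(\mu,\epsilon))$; by Proposition \ref{proposition:properties}(v), $(r,\mu')$ remains a CPP constant for $(x,Tx)$. I verify $\|T_iu\|\leq 1$ by splitting $S_{\mathbb{X}}$ into two regions. If $\min(\|u\pm x\|)\geq r$, the gap gives $\|T_iu\|\leq\|Tu\|+\mu'|g(u)|\leq(1-\epsilon)+\mu'\leq 1$. If instead $\|u-x\|<r$, write $u=ax+b\hat z$ with $\hat z\in x^{\perp}\cap S_{\mathbb{X}}$ and $b=\|u-ax\|\geq 0$; setting $c:=g(\hat z)\in[-1,1]$, one has $T_iu=aTx\pm cb\mu' w_0$. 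Applying the CPP condition with $w=w_0$ and with $w=-w_0$ yields $\|aTx\pm b\mu' w_0\|\leq 1$, and the convex-combination identity $aTx+cb\mu' w_0=\tfrac{1+c}{2}(aTx+b\mu' w_0)+\tfrac{1-c}{2}(aTx-b\mu' w_0)$ together with the triangle inequality gives $\|T_1u\|\leq 1$; symmetrically for $T_2$. The remaining case $\|u+x\|<r$ reduces to the previous by replacing $u$ with $-u$, using linearity of the $T_i$. Hence both $T_i$ are contractions and $T$ is not extreme.
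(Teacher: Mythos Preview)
Your proof is correct and rests on the same ingredients as the paper's: the identification $\ker T=x^{\perp}$ via Theorem~2.2 of \cite{S}, a perturbation of $T$ in the direction of some $w_0\in(Tx)^{\perp}\cap S_{\mathbb{Y}}$ along $x^{\perp}$, the CPP condition to control the perturbed operators near $\pm x$, and the reflexivity/Kadets--Klee/compactness package to control them away from $\pm x$. The organization, however, is genuinely different. The paper proceeds by contradiction at the top level: it defines a \emph{sequence} of perturbations $T_n,S_n$ with shrinking amplitude $\tfrac{1}{n}$, assumes $T$ is extreme so that (say) $\|T_n\|>1$ for all $n$, picks $y_n\in M_{T_n}$, shows via CPP that $y_n\notin B(\pm x,r)$, and then runs the weak-compactness/Kadets--Klee argument on the $y_n$ to reach a contradiction. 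You instead isolate the norm gap $\sup\{\|Tu\|:u\in S_{\mathbb{X}},\ \min\|u\mp x\|\geq r\}\leq 1-\epsilon$ as a standalone lemma (proved by exactly the same weak-convergence machinery, but applied to $T$ itself), and then construct a \emph{single} pair $T_1,T_2$ with amplitude $\mu'<\min(\mu,\epsilon)$ and verify directly that each is a contraction. Your route is more modular and avoids both the sequence and the outer contradiction; the paper's route avoids stating the gap estimate separately. One small remark: the existence of $w_0\in(Tx)^{\perp}\cap S_{\mathbb{Y}}$ follows from $\dim\mathbb{Y}>1$ rather than from smoothness of $\mathbb{Y}$; smoothness is used only in the appeal to Theorem~2.2 of \cite{S}.
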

\begin{proof}
Since $ \mathbb{X} $ is smooth, $rank ~T=1$ and $x \in M_T$, once again applying Theorem $ 2.2 $ of \cite{S}, it is easy to check that $Th=0$ for all $h \in x^{\perp}$. We also note that since $\mathbb{X}$ is smooth, there exists a unique hyperspace $H_1$ of codimension $1$ such that $x \perp_B H_1$. Choose $y \in H_1 \cap S_{\mathbb{X}}$. Then there exists a unique hyperspace $H_2$ in $H_1$ such that $y \perp_B H_2$. Hence, every $z \in \mathbb{X}$ can be uniquely written as $ax+ h_1$, where $a \in \mathbb{R},~h_1 \in H_1$. Again, $h_1$ can be uniquely written as $by+h_2$, where $b\in \mathbb{R},~h_2 \in H_2$. Therefore, every $z \in \mathbb{X}$ can be uniquely written as $ax+by+ h_2$, where $a,~b \in \mathbb{R},~h_2 \in H_2$.
	 Choose $w \in S_{\mathbb{Y}}$ such that $Tx \perp_B w$. For each $n \in \mathbb{N},$ define $T_n,~ S_n$ in the following way:
	\[T_n(z)=a Tx+\frac{b}{n}w,~ S_n(z)=a Tx-\frac{b}{n}w.  \]
  It is easy to see that $T_n,~ S_n \in \mathbb{K}(\mathbb{X}, \mathbb{Y})$.  Clearly, $T= \frac{1}{2}T_n + \frac{1}{2}S_n$ and $T\neq T_n$, $T\neq S_n$  for all $n\in \mathbb{N}$. If possible, suppose that $T$ is an extreme contraction. Then either $\|T_n\|>1$ or $\|S_n\|>1$ for each $n \in \mathbb{N}$. Without loss of generality (passing on to a subsequence, if necessary), let us assume that $\|T_n\|>1$ for all $n \in \mathbb{N}$. Each $T_n$, being a compact operator on a reflexive Banach space, attains its norm. Let $y_n \in M_{T_n}$ for each $n \in \mathbb{N}$. \\
	Now, since $(x,~ Tx)$ is a CPP, there exists $r>0,~\mu >0$ such that $ax+bh \in B(x,~ r)\cap S_{\mathbb{X}} \Longrightarrow \|aTx + b \mu u\|\leq 1 $ for all $h \in H_1 \cap S_{\mathbb{X}}$ and for all $u \in (Tx)^{\perp} \cap S_{\mathbb{Y}}$.  Choose $n_0 \in \mathbb{N}$ such that $0< \frac{1}{n}< \mu$ for all $n \geq n_0$. We show that $\|T_n z\|\leq 1$ for all $n \geq n_0$. Let $z= (a x + b y + h_2) \in B(x,~ r)\cap S_{\mathbb{X}}$, where, $h_2 \in H_2$. If $by+h_2= 0$ then clearly $\|T_n z\| \leq 1$. Let $by+h_2 \neq 0$. Then $z=ax+ch$, where, $c=\|by+h_2\|$ and $h=\frac{1}{c}(by+h_2)$. Then $|b| \leq \|by+h_2\|=c$, since $y \perp_B h_2$. Hence, $z=ax+ch \in B(x,~ r) \cap S_{\mathbb{X}} \Longrightarrow \|a Tx \pm c \mu w\|\leq 1$. 
    Since $|b|\leq c,~  -c \mu< \frac{b}{n}< c \mu$ for all $n \geq n_0$. Therefore, using Proposition \ref{prop:convexity},  we have, for all $n \geq n_0$,
		\[\|T_n z\|=\|T_n(ax+by+h_2)\|=\|a Tx + \frac{b}{n} w\| \leq 1 .\]
Since $y_n \in M_{T_n} $ and $ \|T_n \| >1, $ we must have, $y_n \notin B(x,~ r)$. Similarly, it can be shown that  $y_n \notin B(-x,~ r)$. Since $\mathbb{X}$ is reflexive, $B_{\mathbb{X}}$ is weakly compact. Therefore, $\{y_n\}$ has a weakly convergent subsequence in $B_{\mathbb{X}}$. Without loss of generality, let us assume that $\{y_n\}$ weakly converges to $y_0 \in B_{\mathbb{X}}$.\\
    Now, let $z=(a x + b y + h_2)\in S_{\mathbb{X}}$. Then $|a|\leq \|(a x + by+h_2)\|=1$ and $|b|\leq \|( b y + h_2)\|= \|z-a x\|\leq 1+ |a| \leq 2$. Therefore, $\|(T_n-T)z\|=\|(T_n-T)(ax+by+h_2)\|=\|\frac{b}{n}w\|= \frac{|b|}{n}\leq \frac{2}{n}$. This implies that $\|(T_n-T)z\|\longrightarrow 0$ as $n \longrightarrow \infty$. Hence, $T_n \longrightarrow T$. Since $T$ is a compact operator and $\{y_n\}$ weakly converges to $y_0$, $\{Ty_n\}$ converges strongly to $Ty_0$. Therefore, $\|T_n y_n- Ty_0\|\leq \|T_n y_n-Ty_n\|+ \|Ty_n-Ty_0\|\leq \|T_n-T\|+\|T y_n-Ty_0\|\longrightarrow 0$  as $n \longrightarrow \infty$. Hence, $T_ny_n \longrightarrow Ty_0 \Longrightarrow \|T_n y_n\|\longrightarrow \|Ty_0\| \Longrightarrow \|Ty_0\|\geq 1$, since $\|T_n y_n\|>1$ for all $n \in \mathbb{N}$. Since $\|T\|=1 $ and $y_0 \in B_{\mathbb{X}}, $  $ \|Ty_0\|=1$. Therefore, $y_0 = \pm x $. Again, $y_n \rightharpoonup y_0$ and $\|y_n\|\longrightarrow \|y_0\|$. Hence, $y_n \longrightarrow y_0$. This implies that $y_0 \notin B(\pm x,~ r)$, a contradiction.  Therefore, $T$ is not an extreme contraction. This establishes the theorem.
\end{proof}
We would like to note that combining Theorem \ref{theorem:2dimrank1} and Theorem \ref{theorem:rank1}, we obtain a complete characterization of rank one extreme contractions from a two-dimensional smooth Banach space to a two-dimensional strictly convex and smooth Banach space. We state the characterization in form of the following theorem:

\begin{theorem}\label{theorem:completerank1}
	Let $\mathbb{X},~\mathbb{Y}$ be two-dimensional smooth Banach spaces and in addition, $\mathbb{Y}$ be strictly convex. Let $ T$ be a rank one operator from $\mathbb{X}$ to $\mathbb{Y}$ with $\|T\|=1$. Let $M_T=\{\pm x \}$.  Then $T$ is an extreme contraction if and only if $(x,~ Tx)$ is not a CPP .
\end{theorem}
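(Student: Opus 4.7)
The plan is to derive this characterization as an immediate consequence of the two preceding theorems, Theorem \ref{theorem:2dimrank1} and Theorem \ref{theorem:rank1}, after verifying that the hypotheses of the latter are automatically met in the two-dimensional setting.

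For the sufficiency direction (if $(x,~Tx)$ is not a CPP then $T$ is an extreme contraction), I would simply invoke Theorem \ref{theorem:2dimrank1}, whose hypotheses are met verbatim: $\mathbb{X},~\mathbb{Y}$ are two-dimensional smooth Banach spaces, $\mathbb{Y}$ is strictly convex, $T$ is a rank one operator of norm one, and $x \in M_T$. No extra work is needed here.

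For the necessity direction (if $(x,~Tx)$ is a CPP then $T$ is not an extreme contraction), I would appeal to Theorem \ref{theorem:rank1}. The point to verify is that $\mathbb{X}$, being two-dimensional, is automatically reflexive and, as a finite-dimensional normed space, has the Kadets-Klee property (weak and strong convergence coincide in finite dimensions, so the condition $x_n \rightharpoonup x$ with $\|x_n\|\to\|x\|$ trivially forces $x_n\to x$). The remaining hypotheses, namely $M_T=\{\pm x\}$ and $(x,~Tx)$ is a CPP, are given. Hence Theorem \ref{theorem:rank1} applies and yields that $T$ is not an extreme contraction, which is the contrapositive we need.

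The proof therefore reduces to a short combinatorial assembly of the two earlier results; there is no real obstacle, since the only subtlety is recognizing that the smoothness/reflexivity/Kadets-Klee package required by Theorem \ref{theorem:rank1} is free of charge in the two-dimensional situation. The result should be stated as an \emph{if and only if}, with each direction handled in a single sentence citing the appropriate theorem.
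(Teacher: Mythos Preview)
Your proposal is correct and matches the paper's approach exactly: the paper states Theorem~\ref{theorem:completerank1} immediately after Theorem~\ref{theorem:rank1} with the remark that it follows by combining Theorem~\ref{theorem:2dimrank1} and Theorem~\ref{theorem:rank1}. Your observation that reflexivity and the Kadets--Klee property are automatic in finite dimensions is precisely the point needed to make Theorem~\ref{theorem:rank1} applicable.
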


\begin{remark}
In addition, if we assume that $\mathbb{X}$ is strictly convex then $rank~T=1$ will imply that $M_T=\{\pm x\}$ and so the assumption that $M_T=\{\pm x\}$ can be relaxed from the hypothesis of Theorem \ref{theorem:rank1} and Theorem \ref{theorem:completerank1}.  
\end{remark}

As before, if the domain space $\mathbb{X}$ (codomain space $\mathbb{Y})$ is a Hilbert space then combining Theorem \ref{theorem:hilbertrank1} and Theorem \ref{theorem:rank1}, we have the following two characterizations:

\begin{theorem}\label{theorem:completeHrank1}
	Let $\mathbb{H}$ be a Hilbert space and $\mathbb{Y}$ be a  two-dimensional smooth and strictly convex Banach space. Let $ T$ be a rank one operator from $\mathbb{H}$ to $\mathbb{Y}$ with $\|T\|=1$. Then $M_T=\{\pm x \}$. Moreover, $T$ is an extreme contraction if and only if $(x,~ Tx)$ is not a CPP .
\end{theorem}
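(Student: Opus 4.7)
The plan is to split the statement into two parts: first identify the norm-attainment set $M_T$ by exploiting the Hilbert space structure of the domain, and then deduce the biconditional characterization by combining Theorem \ref{theorem:rank1} with Theorem \ref{theorem:hilbertrank1}. No genuinely new argument is needed; the task is chiefly to verify that the hypotheses of both prior theorems are met in this Hilbert-to-Banach setting.

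To establish that $M_T = \{\pm x\}$ for some unit vector $x$, I would use the fact that $\mathrm{rank}~T = 1$ forces $\ker T$ to be a closed hyperplane in $\mathbb{H}$, whence the Hilbert space structure yields the orthogonal decomposition $\mathbb{H} = \ker T \oplus (\ker T)^{\perp}$, with $(\ker T)^{\perp}$ one-dimensional. Choose a unit vector $x$ spanning $(\ker T)^{\perp}$. For any $v \in S_{\mathbb{H}}$, write $v = ax + h$ with $h \in \ker T$ and $a^2 + \|h\|^2 = 1$; then $\|Tv\| = |a|\,\|Tx\|$, which is maximized precisely when $|a|=1$, that is, when $v = \pm x$. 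Since $\|T\|=1$, it follows that $\|Tx\|=1$ and $M_T = \{\pm x\}$, as desired.

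For the biconditional, observe that a Hilbert space is automatically smooth, reflexive, and possesses the Kadets-Klee property, the last point being a consequence of the identity $\|x_n - x\|^2 = \|x_n\|^2 - 2\langle x_n, x\rangle + \|x\|^2$ applied to a weakly convergent sequence with $\|x_n\| \to \|x\|$. The codomain $\mathbb{Y}$ is by hypothesis two-dimensional, smooth, and strictly convex. The reverse implication, namely that $(x, Tx)$ failing to be a CPP forces $T$ to be an extreme contraction, then follows immediately from Theorem \ref{theorem:hilbertrank1}. The forward implication follows by contraposition from Theorem \ref{theorem:rank1}: if $(x, Tx)$ were a CPP, that theorem would produce a nontrivial convex decomposition of $T$, contradicting extremality.

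The main anticipated difficulty is not technical but bookkeeping: making sure that each hypothesis (smoothness, reflexivity, Kadets-Klee property, strict convexity, two-dimensionality where needed) is correctly allocated between the domain and codomain in the invocations of the two prior theorems. Once the identification $M_T=\{\pm x\}$ is in place, the rest of the argument assembles cleanly from earlier results.
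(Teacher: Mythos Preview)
Your proposal is correct and follows essentially the same route as the paper, which simply declares the result as a combination of Theorem~\ref{theorem:hilbertrank1} and Theorem~\ref{theorem:rank1} without spelling out the details. Your explicit verification that $M_T=\{\pm x\}$ via the orthogonal decomposition $\mathbb{H}=\ker T\oplus(\ker T)^\perp$, and your check that Hilbert spaces satisfy the smoothness, reflexivity, and Kadets--Klee hypotheses of Theorem~\ref{theorem:rank1}, are exactly the bookkeeping the paper leaves implicit.
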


\begin{theorem}\label{theorem:completehrank1}
	Let $\mathbb{X}$ be a  two-dimensional smooth Banach space and $\mathbb{H}$ be a Hilbert space. Let $ T$ be a rank one operator from $\mathbb{X}$ to $\mathbb{H}$ with $\|T\|=1$. Let $M_T=\{\pm x \}$. Then $T$ is an extreme contraction if and only if $(x,~ Tx)$ is not a CPP .
\end{theorem}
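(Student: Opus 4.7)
The plan is to establish the biconditional by combining Theorem~\ref{theorem:hilbertrank1} for one implication and Theorem~\ref{theorem:rank1} for the other, exactly in the spirit of the parenthetical remark preceding the statement.

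For the direction $(x,Tx)$ is not a CPP $\Longrightarrow$ $T$ is an extreme contraction, I would apply the $\mathbb{X}\to\mathbb{H}$ case of Theorem~\ref{theorem:hilbertrank1}. The statement of that theorem also requests $\mathbb{X}$ to be strictly convex, which is not in our hypotheses. However, re-examining the proof template of Theorem~\ref{theorem:2dimrank1} shows that strict convexity of the \emph{domain} is never invoked: what is really needed is that $Tx$ be an extreme point of $B_{\mathbb{H}}$, so that any convex decomposition $T=\tfrac{1}{2}(T_1+T_2)$ with $\|T_1\|,\|T_2\|\le 1$ forces $T_1x=T_2x=Tx$. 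This extremality is automatic, because any Hilbert space is strictly convex. With that observation in hand, the remainder of the argument of Theorem~\ref{theorem:hilbertrank1} carries through verbatim: one writes $T_1y=\mu w$ and $T_2y=-\mu w$ for a suitable $y\in x^{\perp}\cap S_{\mathbb{X}}$ and $w\in (Tx)^{\perp}\cap S_{\mathbb{H}}$, and the failure of CPP for $(x,Tx)$ produces $a,b$ with $\|ax+by\|=1$ yet $\|T_1(ax+by)\|>1$ or $\|T_2(ax+by)\|>1$, contradicting $\|T_i\|\le 1$.

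For the reverse direction I would argue by contrapositive: if $(x,Tx)$ is a CPP, then $T$ is not an extreme contraction. This is exactly the conclusion of Theorem~\ref{theorem:rank1}, applied with domain $\mathbb{X}$ and codomain $\mathbb{H}$. All the required hypotheses are in place: $\mathbb{X}$ is smooth by assumption; two-dimensionality of $\mathbb{X}$ delivers reflexivity as well as the Kadets--Klee property for free, because on a finite-dimensional space weak and norm convergence coincide; $\mathbb{H}$ is automatically smooth; and $M_T=\{\pm x\}$ is an explicit hypothesis of the theorem we are proving.

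The only step that is not pure bookkeeping is the sanity check that strict convexity of $\mathbb{X}$ is genuinely redundant in the forward direction---that is, the verification that the proof of Theorem~\ref{theorem:hilbertrank1} never uses this property of the domain. This is essentially the only obstacle; once that inspection is completed, the characterization follows immediately from assembling the two cited theorems.
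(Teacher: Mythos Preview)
Your proposal is correct and follows exactly the paper's own route: the paper states Theorem~\ref{theorem:completehrank1} as an immediate consequence of combining Theorem~\ref{theorem:hilbertrank1} (for the forward direction) with Theorem~\ref{theorem:rank1} (for the contrapositive of the reverse direction), just as you describe. Your additional verification that strict convexity of the domain is not actually used in the argument of Theorem~\ref{theorem:2dimrank1} (hence of Theorem~\ref{theorem:hilbertrank1}) is a worthwhile sanity check, since the hypotheses of Theorem~\ref{theorem:hilbertrank1} nominally request strict convexity of $\mathbb{X}$ while Theorem~\ref{theorem:completehrank1} does not; the paper glosses over this mismatch, but your inspection confirms there is no real gap.
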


As concrete applications of Theorem \ref{theorem:completerank1}, we would completely describe rank one extreme contractions between two-dimensional strictly convex smooth Banach spaces, in three special cases. First, we completely describe rank one extreme contractions in $ \mathbb{L}(\ell_{4}^{2}). $ In view of Theorem \ref{theorem:completerank1}, our strategy is to determine when a pair of points on the unit sphere of $ \ell_{4}^{2} $ is not a CPP. To this end, we have the following theorem: 

\begin{theorem}\label{theorem:l4}
	Let $ \mathbb{X} = \ell_4^2. $ Then $((x,~y),~(x_1,~y_1)) \in S_{\mathbb{X}} \sf X S_{\mathbb{X}} $ is not a CPP if and only if $xy=0$ and $x_1y_1 \neq 0$.
\end{theorem}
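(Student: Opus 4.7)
The plan is to exploit the smoothness of $\ell_{4}^{2}$ to explicitly parametrize $(x,y)^{\perp} \cap S_{\mathbb{X}}$ and $(x_{1},y_{1})^{\perp} \cap S_{\mathbb{X}}$, and then to reduce the defining inequality of CPP to an exact polynomial identity obtained from expanding $\|\cdot\|^{4}$. Since $\ell_{4}^{2}$ is smooth, $(x,y) \perp_{B} (z_{1},z_{2})$ if and only if $x^{3}z_{1} + y^{3}z_{2} = 0$, so $(x,y)^{\perp} \cap S_{\mathbb{X}} = \{\pm z\}$ for a unique unit vector $z=(z_{1},z_{2})$, and similarly $(x_{1},y_{1})^{\perp} \cap S_{\mathbb{X}} = \{\pm w\}$ for $w = (w_{1}, w_{2})$. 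Expanding $\|(u,v)\|^{4} = u^{4}+v^{4}$ binomially and using orthogonality to kill the first-order cross terms yields
\begin{align*}
\|a(x,y) + bz\|^{4} &= a^{4} + 6a^{2}b^{2}A + 4ab^{3}B + b^{4},\\
\|a(x_{1},y_{1}) + b\mu w\|^{4} &= a^{4} + 6a^{2}b^{2}\mu^{2}A' + 4ab^{3}\mu^{3}B' + b^{4}\mu^{4},
\end{align*}
where $A := x^{2}z_{1}^{2}+y^{2}z_{2}^{2}$, $B := xz_{1}^{3}+yz_{2}^{3}$, and $A',B'$ are defined analogously. The key algebraic observations are: $A>0$ if and only if $xy\neq 0$ (analogously for $A'$), and $B$ automatically vanishes whenever $xy=0$, since then $z$ is a standard basis vector.

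Imposing $a(x,y)+bz \in S_{\mathbb{X}}$ to eliminate $a^{4}$ and subtracting, I obtain the master identity
\[
\|a(x_{1},y_{1}) + b\mu w\|^{4} - 1 \;=\; 6a^{2}b^{2}(\mu^{2}A'-A) + 4ab^{3}(\mu^{3}B'-B) + b^{4}(\mu^{4}-1).
\]
For the ``only if'' direction, assume $xy=0$ and $x_{1}y_{1}\neq 0$, so $A=0<A'$. Then for any fixed $\mu>0$ the right-hand side equals $6a^{2}b^{2}\mu^{2}A' + O(b^{3})$, which is strictly positive for all sufficiently small $|b|\neq 0$. Hence no matter which $r>0$ and $\mu>0$ are proposed, one can take $|b|$ small enough that $a(x,y)+bz \in B((x,y),r) \cap S_{\mathbb{X}}$ while $\|a(x_{1},y_{1})+b\mu w\|>1$, so $((x,y),(x_{1},y_{1}))$ fails to be CPP.

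For the converse I split into two cases. If $xy = x_{1}y_{1} = 0$, then $A=A'=B=B'=0$, so the master identity collapses to $b^{4}(\mu^{4}-1) \leq 0$ for any $\mu\in(0,1]$, and any pair $(r,1)$ is a CPP constant. If $xy\neq 0$, so $A>0$, I choose $\mu\in(0,1]$ small enough that $\mu^{2}A' - A < 0$ (any $\mu$ works when $A'=0$; otherwise take $\mu<\sqrt{A/A'}$), producing a strictly negative coefficient for the $b^{2}$-term. Then I pick $r>0$ small enough that on $B((x,y),r)\cap S_{\mathbb{X}}$ the negative $b^{2}$-term dominates the cubic term $4ab^{3}(\mu^{3}B'-B)$ in absolute value for both signs of $z$ and $w$; the $b^{4}$-term is already nonpositive. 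This yields $\|a(x_{1},y_{1})+b\mu w\| \leq 1$ on the required neighbourhood, establishing CPP.

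The main obstacle I anticipate is controlling the third-order term $4ab^{3}(\mu^{3}B'-B)$ in the converse, because its sign flips under $z\mapsto -z$ or $w\mapsto -w$, so one must verify uniformity across all four sign combinations and make the ``$b^{2}$ dominates $b^{3}$'' argument quantitative by bounding $|b|\leq r$ and $a \geq 1-r$ along the branch of $S_{\mathbb{X}}$ near $(x,y)$. Once this is arranged, the admissible radius $r$ depends on $\mu$, $B$, and $B'$ in an explicit but elementary manner.
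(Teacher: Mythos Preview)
Your proposal is correct and follows essentially the same route as the paper: both exploit the explicit form $(x,y)^{\perp}=\mathrm{span}\{(y^{3},-x^{3})\}$, expand $\|\cdot\|^{4}$ binomially, and reduce the CPP condition to a polynomial inequality in $b$ whose leading $b^{2}$-coefficient is governed by $x^{2}y^{2}$ and $x_{1}^{2}y_{1}^{2}$. Your packaging via the constants $A,B,A',B'$ and the single ``master identity'' is tidier than the paper's presentation, and you are more explicit about the sign ambiguity in $\pm z,\pm w$; the only cosmetic difference is that the paper fixes $r$ first and then finds $\mu$, whereas you fix $\mu$ first and then shrink $r$, but either order works.
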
	
\begin{proof}  	First we prove the sufficient part of the theorem. Suppose that  $xy=0$ and $x_1y_1 \neq 0$. If possible, suppose that $((x,~y),~(x_1,~y_1))$ is a CPP with CPP constant $(r,~ \mu)$. We observe that in $ \ell_4^2, (x,y) \bot_B \lambda (y^3,-x^3) $ where $ \lambda \in \mathbb{R}.$  
	Let $k=\frac{1}{\|(y^3,-x^3)\|}$ and $k_1=\frac{1}{\|(y_1^3,-x_1^3)\|}$.  
	   Since $xy=0$, $(x,~y)=(\pm 1,0)$ or $(0,\pm 1)$. Let $(x,~y)=( 1,0)$. Then $(a,b)\in B(( 1,0),~r)\cap S_{\mathbb{X}} \Rightarrow \|a(x_1,~y_1)+b\mu k_1(y_1^3,-x_1^3)\|^4 \leq 1$ and so   
	\begin{eqnarray*}
	a^4+6a^2b^2 k_1^2 x_1^2 y_1^2{\mu}^2+4ab^3k_1^3x_1y_1(y_1^4-x_1^4){\mu}^3+b^4{\mu}^4 &\leq & a^4+b^4\\ 
	\Rightarrow 6a^2 k_1^2 x_1^2 y_1^2{\mu}^2+4abk_1^3x_1y_1(y_1^4-x_1^4){\mu}^3+b^2({\mu}^4-1) &\leq & 0 \ldots (1) 
	\end{eqnarray*} 
	 We can find sufficiently small $b$ for which $(a,b)\in B(( 1,0),~r)\cap S_{\mathbb{X}} $ but (1) does not hold. Therefore, $((1,0),~(x_1,~y_1))$ is not a CPP. Similarly, it can be shown that  $((-1,~0),~(x_1,~y_1))$ is not a CPP and $((0, \pm 1),~(x_1,~y_1))$  is not a CPP. \\
	Next, we prove the necessary part of the theorem.	Suppose that  $((x,~y),~(x_1,~y_1)) $ is not a CPP.\\
	If possible, suppose that  $xy=x_1y_1= 0$. Then it is easy to verify that $((x,~y),~(x_1,~y_1))$ is a CPP, contradicting our assumption. Therefore, either $xy\neq 0$ or $x_1y_1 \neq 0$.\\
	Next, let $xy \neq 0$. Choosing $0<r< min \{\frac{3|xy|}{8k+6|xy|},~1\}$, we can find a $\mu > 0$ such that  $a(x,y)+bk(\mp y^3,\pm x^3)\in B((x,y),~r)\cap S_{\mathbb{X}}$ implies that $\|a(x_1,y_1) + b \mu k_1(\mp y_1^3, \pm x_1^3)\| \leq 1. $
	This shows that $((x,~y),~(x_1,~y_1)) $ is  a CPP if $ xy \neq 0.$ We note that the case  $ x_1y_1 \neq 0 $ has already being taken care of.  Hence, if $((x,~y),~(x_1,~y_1))$ is not a CPP we must have $xy=0$ and $x_1y_1 \neq 0$.
	This completes the proof of the theorem.	   
\end{proof}

Now we completely describe rank one extreme contractions in $ \mathbb{L}(\ell_{4}^{2}), $ by means of the following theorem:

\begin{theorem}\label{theorem:ellrank1}
	In $ \mathbb{L}(\ell_4^2), $ the matrix representations (with respect to the standard ordered basis) of the class of all rank one  extreme contractions are given by $$
	\begin{bmatrix}
	x_1 & 0\\
	y_1 & 0	
	\end{bmatrix}
	,
	\quad
	\begin{bmatrix}
	0 & x_1\\
	0 & y_1	
	\end{bmatrix}
	,$$ 
	where, $x_1y_1 \neq 0$ and $x_1^4+y_1^4=1$.
\end{theorem}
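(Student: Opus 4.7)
The plan is to obtain this characterization as a direct application of Theorem \ref{theorem:completerank1} together with the CPP description of pairs in $S_{\ell_4^2}$ provided by Theorem \ref{theorem:l4}. Since $\ell_4^2$ is both strictly convex and smooth, the hypotheses of Theorem \ref{theorem:completerank1} are satisfied with $\mathbb{X} = \mathbb{Y} = \ell_4^2$, and by the remark following that theorem, strict convexity of the domain forces $M_T = \{\pm x\}$ for some $x \in S_{\ell_4^2}$ whenever $T$ is a rank one norm one operator. Therefore $T$ is an extreme contraction if and only if $(x, Tx)$ fails to be a CPP.

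Next I would invoke Theorem \ref{theorem:l4} to translate the failure of CPP into coordinatewise information. Writing $x = (x,y)$ and $Tx = (x_1,y_1)$, the non-CPP condition is equivalent to $xy = 0$ together with $x_1 y_1 \neq 0$. The first condition forces $x \in \{(\pm 1,0),(0,\pm 1)\}$, while the second forces both coordinates of $Tx$ to be nonzero. The norm condition $\|T\| = 1 = \|Tx\|$ gives $x_1^4 + y_1^4 = 1$.

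To pin down the matrix of $T$, I would use the smoothness of $\ell_4^2$ to determine $x^\perp$. A direct Birkhoff-James computation shows $(1,0) \perp_B (0,1)$ and $(0,1) \perp_B (1,0)$, and by smoothness these one-dimensional subspaces are exactly $(1,0)^\perp$ and $(0,1)^\perp$ respectively. Since $T$ has rank one and $x \in M_T$, Theorem 2.2 of \cite{S} (already used freely in the paper) gives $T(x^\perp) \subseteq (Tx)^\perp$, which in the rank one setting forces $T|_{x^\perp} = 0$. Hence when $x = \pm(1,0)$ the second column of $T$ vanishes and $T e_1 = \pm(x_1,y_1)$, yielding the first matrix form (absorbing the sign into $(x_1,y_1)$); symmetrically, $x = \pm(0,1)$ yields the second matrix form.

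Conversely, for the two prescribed matrix forms I would verify directly that $\|T\| = 1$ is attained precisely at $\pm e_1$ (resp.\ $\pm e_2$) and that $Tx = (x_1,y_1)$ has both coordinates nonzero, so that Theorem \ref{theorem:l4} gives $(x,Tx)$ is not a CPP, and therefore Theorem \ref{theorem:completerank1} certifies $T$ as an extreme contraction. The main obstacle is really just bookkeeping: identifying $x^\perp$ correctly in $\ell_4^2$ and handling the sign ambiguity in $M_T = \{\pm x\}$, both of which are routine once smoothness and the Birkhoff-James orthogonality computation in $\ell_4^2$ are in hand.
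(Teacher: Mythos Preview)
Your proposal is correct and follows essentially the same route as the paper: invoke Theorem~\ref{theorem:completerank1} (using strict convexity and smoothness of $\ell_4^2$) to reduce to the CPP criterion, then apply Theorem~\ref{theorem:l4} to force $xy=0$ and $x_1y_1\neq 0$, and finally use Theorem~2.2 of \cite{S} to kill the second column (resp.\ first column). Your write-up is somewhat more explicit than the paper's in spelling out the converse direction and in handling the sign ambiguity in $M_T=\{\pm x\}$, but these are elaborations of the same argument rather than a different approach.
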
	
\begin{proof}
	Let $T$ be a rank one operator on $\ell_4^2$ such that $\|T\|=1$. Suppose $T$ attains norm at $(x,y)$ and $T(x,y)=(x_1,y_1)$. Then from Theorem \ref{theorem:completerank1}, we have, $T$ is an extreme contraction if and only if $((x,y),(x_1,y_1))$ is not a CPP. Therefore, by Theorem \ref{theorem:l4}, we have, $xy=0$ and $x_1y_1 \neq 0$. Now, $ xy = 0 $ implies that either $ (x,y) = \pm (1,0) $ or $ (x,y) = \pm (0,1). $ Since $ T $ is rank one, applying Theorem $ 2.2 $ of \cite{S}, it is easy to deduce that in the first case, $ T(0,1)=(0,0), $ whereas in the second case, $ T(1,0)=(0,0). $  Therefore, the matrix representation of $T$ with respect to the standard ordered basis is necessarily of the form  $$
	\begin{bmatrix}
	x_1 & 0\\
	y_1 & 0	
	\end{bmatrix}
	,
	\quad
	\begin{bmatrix}
	0 & x_1\\
	0 & y_1	
	\end{bmatrix}
	,$$ 
	where, $x_1y_1 \neq 0$ and $x_1^4+y_1^4=1$.
\end{proof}

We would like to remark that Theorem \ref{theorem:ellrank1} falls completely within the scope of the analysis of extreme contractions in $ \mathbb{L}(\ell_{p}^{2}), $ presented in \cite{G}. However, the true strength of Theorem \ref{theorem:completerank1} and Theorem \ref{theorem:completeHrank1} lies in the fact that it is applicable  for general two-dimensional spaces and furthermore, we are allowed to take the domain space and the range space different from one another. In the next theorem, we present a result in this direction by proving that there does not exist any rank one extreme contraction in $ \mathbb{L}(\mathbb{H}, \ell_{p}^2), $ whenever $ p $ is even and $\mathbb{H}$ is any Hilbert space. Clearly, this is outside the scope of \cite{G,BR} and moreover, this also illustrates how drastically the nature of extreme contractions can change if we chose the domain and the range space different from one another.\\

\begin{theorem}\label{theorem:ell24rank1}
	 Let $ T$ be a rank one operator from $\mathbb{H}$ to $\ell_p^2$, where $p$ is even and $\mathbb{H}$ is any Hilbert space. Let $\|T\|=1$. Then $T$ is not an extreme contraction.
\end{theorem}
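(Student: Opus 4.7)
The plan is to invoke Theorem \ref{theorem:completeHrank1}: since $\ell_p^2$ is a two-dimensional smooth and strictly convex Banach space for every even $p\geq 2$, it suffices to show that $(x,Tx)$ is a CPP, where $M_T=\{\pm x\}$. For $p=2$, $\ell_2^2$ is a Hilbert space and Proposition \ref{proposition:properties}(ii) immediately gives that $(x,Tx)$ is a CPP, so assume $p\geq 4$ is even. Using that $x$ and $z$ are orthonormal in $\mathbb{H}$, one has $\|ax+bz\|_{\mathbb{H}}=1$ iff $a^2+b^2=1$, and $\|ax+bz-x\|_{\mathbb{H}}^2=2(1-a)$; hence the hypothesis $ax+bz\in B(x,r)\cap S_{\mathbb{H}}$ depends only on $(a,b)$ and reduces to $a^2+b^2=1$ together with $a>1-r^2/2$. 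Moreover, since $\ell_p^2$ is two-dimensional, $(Tx)^\perp\cap S_{\ell_p^2}=\{\pm w_0\}$ is a doubleton, and handling both signs of $w_0$ is automatic because $b$ already ranges over both signs.

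Write $Tx=(x_1,y_1)$ with $x_1^p+y_1^p=1$ (absolute values can be dropped since $p$ is even; we may further assume $x_1,y_1\geq 0$ by flipping coordinate signs). If $x_1y_1=0$, say $(x_1,y_1)=(1,0)$, then $w_0=\pm(0,1)$ and $\|aTx+b\mu w_0\|_{\ell_p^2}^p=|a|^p+|b\mu|^p$. Since $|a|,|b|\leq 1$ and $p\geq 2$, one has $|a|^p+|b|^p\leq a^2+b^2=1$, so $(r,\mu)=(1,1)$ is a valid CPP constant. Assume now $x_1y_1>0$. Birkhoff-James orthogonality in the smooth space $\ell_p^2$ at $(x_1,y_1)$ is characterized, thanks to $p$ being even, by $x_1^{p-1}u+y_1^{p-1}v=0$; hence $w_0=k(y_1^{p-1},-x_1^{p-1})$ with $k=(x_1^{p(p-1)}+y_1^{p(p-1)})^{-1/p}$.

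Parametrize $a=\cos\phi,\,b=\sin\phi$ and consider
\[
g(\phi)=(\cos\phi\cdot x_1+\mu\sin\phi\cdot k y_1^{p-1})^{p}+(\cos\phi\cdot y_1-\mu\sin\phi\cdot k x_1^{p-1})^{p}.
\]
Because $p$ is even, $g$ is a polynomial in $\cos\phi,\sin\phi$ and hence smooth. A direct computation gives $g(0)=1$, $g'(0)=0$ (a manifestation of $Tx\perp_B w_0$), and
\[
g''(0)=-p+p(p-1)\mu^2 k^2 x_1^{p-2} y_1^{p-2}.
\]
Choosing any $\mu>0$ with $\mu^2<((p-1)k^2 x_1^{p-2}y_1^{p-2})^{-1}$ forces $g''(0)<0$; Taylor's theorem then furnishes a $\delta>0$ with $g(\phi)\leq 1$ on $|\phi|<\delta$. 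Taking $r=\sqrt{2(1-\cos\delta)}$ yields a CPP constant $(r,\mu)$ for $(x,Tx)$, and an appeal to Theorem \ref{theorem:completeHrank1} completes the proof. The only nontrivial step is the second-derivative computation in the case $x_1y_1\neq 0$; the essential technical feature enabling it is that, for even $p$, the function $\|\cdot\|_{\ell_p^2}^p$ is polynomial in the coordinates, which keeps the Taylor analysis clean and makes $g''(0)$ a simple linear function of $\mu^2$.
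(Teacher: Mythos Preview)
Your proof is correct and follows the same overall strategy as the paper: establish that $(x,Tx)$ is a CPP for the norm-attaining point $x$ and then invoke Theorem~\ref{theorem:completeHrank1}. The only difference is in how the key inequality $\|aTx+b\mu w_0\|_p^p\leq 1$ is verified near $(a,b)=(1,0)$ on the circle $a^2+b^2=1$. The paper writes $p=2m$, expands both $\|aTx+b\mu w_0\|_p^p$ and $1=(a^2+b^2)^m$ via the binomial theorem, and then chooses $\mu$ small enough so that the degree-$2$ term on the right dominates all terms on the left; you instead parametrize by $\phi$, compute $g(0)=1$, $g'(0)=0$, $g''(0)=-p+p(p-1)\mu^2k^2x_1^{p-2}y_1^{p-2}$, and use the second-derivative test to obtain a local maximum at $\phi=0$. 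These are two organizations of the same calculation, both crucially relying on $\|\cdot\|_p^p$ being a polynomial when $p$ is even. Your version has the mild advantage of making transparent \emph{why} the argument works (the orthogonality $Tx\perp_B w_0$ forces $g'(0)=0$, so only the sign of $g''(0)$ matters), while the paper's version yields an explicit admissible $\mu$ for any prescribed $r\in(0,1)$.
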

\begin{proof}
	We first show that for any $\widetilde{x}\in S_{\mathbb{H}}$ and $(x_1,~y_1) \in S_{\ell_p^2}$, $(\widetilde{x},~(x_1,~y_1))$ is a CPP, if $p$ is even. Let $p=2m$.  Choose $r, ~0 <r<1$.  Let $ \widetilde{u}\in \widetilde{x}^{\perp}\cap S_{\mathbb{H}}$. Then $a\widetilde{x}+b\widetilde{u}\in B(\widetilde{x},~r)\cap S_{\mathbb{H}}$ gives that $a^2+b^2=1$, $|a-1|<r$ and $|b|<r$. Clearly, $(x_1,y_1)\perp_B (-y_1^{p-1},x_1^{p-1})$. Let $k_1=\frac{1}{\|(-y_1^{p-1},x_1^{p-1})\|}$. In order to prove our claim, we have to find $\mu >0$ such that $\|a(x_1,y_1)+b\mu k_1(\mp y_1^{p-1}, \pm x_1^{p-1})\|^p \leq 1$.
	 Now,  $\|a(x_1,y_1)+b\mu k_1(-y_1^{p-1},x_1^{p-1})\|^p\leq 1 $ 
	if and only if $ a^p+\binom{p}{2}(ax_1y_1)^{p-2}(b\mu k_1)^2+\binom{p}{3}(ax_1y_1)^{p-3}(b \mu k_1)^3(x_1^p-y_1^p)+ \ldots + (b\mu)^p \leq  1 = (a^2+b^2)^m .$ This holds if and only if  
	\begin{eqnarray*}
	\binom{p}{2}(ax_1y_1)^{p-2}(b\mu k_1)^2+ & & \binom{p}{3}(ax_1y_1)^{p-3}(b \mu k_1)^3(x_1^p-y_1^p)  +  \ldots  \ldots + (b\mu)^p  \\
	& & \leq \binom{m}{1}a^{p-2}b^2+\binom{m}{2}a^{p-4}b^4+\ldots + b^p.
	\end{eqnarray*}
	 Choose $ 0<\mu \leq 1$ such that
	\begin{eqnarray*}
	\mu \Bigg[ \binom{p}{2}(1+r)^{p-2}|x_1y_1|^{p-2}k_1^2 & + & \binom{p}{3}(1+r)^{p-3}rk_1^3|x_1y_1|^{p-3}|x_1^p-y_1^p|  \\
	& + & \ldots + r^{p-2}\Bigg] \leq \binom{m}{1}(1-r)^{p-2}.
	\end{eqnarray*}
	 Then 
	\begin{eqnarray*}
	 & &\binom{p}{2}(ax_1y_1)^{p-2}(\mu k_1)^2+\binom{p}{3}(ax_1y_1)^{p-3}b( \mu k_1)^3(x_1^p-y_1^p) + \ldots + b^{p-2} \mu^p  \\
	&\leq& \mu  \Bigg[\binom{p}{2}(1+r)^{p-2}|x_1y_1|^{p-2}k_1^2+\binom{p}{3}(1+r)^{p-3}rk_1^3|x_1y_1|^{p-3}|x_1^p-y_1^p| \\
	&  & ~~+ \ldots \ldots + r^{p-2}\Bigg] \\
	 & \leq &    \binom{m}{1}(1-r)^{p-2} \\
	& \leq & \binom{m}{1}a^{p-2}+\binom{m}{2}a^{p-4}b^2+\ldots + b^{p-2}.
	\end{eqnarray*}
	Hence, 
	  \[\|a(x_1,y_1)+b\mu k_1(-y_1^{p-1},x_1^{p-1})\|^p\leq 1 .\]
		Similarly, \[\|a(x_1,y_1)+b\mu k_1(y_1^{p-1},-x_1^{p-1})\|^p\leq 1 .\]
	  Thus $(\widetilde{x},~(x_1,y_1))$ is a CPP.
	 Therefore, if $T\in \mathbb{L}(\mathbb{H},~\ell_p^2)$ be such that $\|T\|=1$ and $rank~T=1$ then by Theorem \ref{theorem:completeHrank1}, we  conclude that  $T$ is not an extreme contraction. 
\end{proof}

Using similar method, we can characterize the class of all rank one extreme contraction in $ \mathbb{L}(\ell_{4}^2, \mathbb{H}) $, where $\mathbb{H}$ is any Hilbert space, in the form of the following theorem: 
 
\begin{theorem}\label{theorem:ell4hrank1}
	Let $\mathbb{H}$ be any Hilbert space and $T$ be a rank one operator in $ \mathbb{L}(\ell_4^2,\mathbb{H}) $ with $\|T\|=1$. Then $T$ is an extreme contraction if and only if $M_T=\{\pm(1,0)\}$ or $M_T=\{\pm(0,1)\}$. In particular, if $\mathbb{H}=\ell_2^n$ then  the matrix representations (with respect to the standard ordered basis) of the class of all rank one  extreme contractions are given by $$
	\begin{bmatrix}
	x_1 & 0\\
	x_2 & 0	\\
	\vdots & \vdots \\
	x_n & 0
	\end{bmatrix}
	,
	\quad
	\begin{bmatrix}
	0 & x_1\\
	0 & x_2\\
	\vdots & \vdots \\
	0 & x_n	
	\end{bmatrix}
	,$$ 
	where, $x_1^2+x_2^2+ \ldots x_n^2=1$.
\end{theorem}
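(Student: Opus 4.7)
The plan is to reduce to Theorem \ref{theorem:completehrank1} applied with $\mathbb{X} = \ell_4^2$. Since $\ell_4^2$ is two-dimensional, smooth and strictly convex, and $T$ is rank one of norm one attaining its norm (finite-dimensional domain), the remark following Theorem \ref{theorem:rank1} ensures $M_T = \{\pm(x_0, y_0)\}$ for some $(x_0, y_0) \in S_{\ell_4^2}$. Theorem \ref{theorem:completehrank1} then says $T$ is extreme iff $((x_0, y_0), T(x_0, y_0))$ is not a CPP. A crucial simplification is that in a Hilbert space $\|ah + b\mu w\|^2 = a^2 + b^2 \mu^2$ for any orthonormal pair $h, w$, so whether $((x_0, y_0), h)$ is a CPP depends only on $(x_0, y_0)$. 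The characterization will thus follow once I identify which $(x_0, y_0) \in S_{\ell_4^2}$ fail the CPP property; I expect these to be exactly $\pm(1,0), \pm(0,1)$, as in Theorem \ref{theorem:l4}.

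For $x_0 y_0 \neq 0$, I would show $((x_0, y_0), h)$ is a CPP. The set $(x_0, y_0)^\perp \cap S_{\ell_4^2}$ equals $\{\pm k(-y_0^3, x_0^3)\}$ with $k = (x_0^{12} + y_0^{12})^{-1/4}$, and expanding $\|a(x_0, y_0) + bk(-y_0^3, x_0^3)\|_4^4 = 1$ and using $k^4(x_0^{12}+y_0^{12}) = 1$ yields
\begin{equation*}
a^4 + 6 a^2 b^2 k^2 x_0^2 y_0^2 + 4 a b^3 k^3 x_0 y_0 (x_0^8 - y_0^8) + b^4 = 1.
\end{equation*}
The target inequality $a^2 + b^2 \mu^2 \leq 1$ squares to $a^4 + 2 a^2 b^2 \mu^2 + b^4 \mu^4 \leq 1$, so it suffices to verify
\begin{equation*}
2 a^2 b^2 (\mu^2 - 3 k^2 x_0^2 y_0^2) + b^4 (\mu^4 - 1) \leq 4 a b^3 k^3 x_0 y_0 (x_0^8 - y_0^8).
\end{equation*}
Choosing $\mu \in (0,1)$ with $\mu^2 < 3 k^2 x_0^2 y_0^2$ (possible since $x_0 y_0 \neq 0$) makes the leading $a^2 b^2$ coefficient strictly negative, and then choosing $r$ small so that on $B((x_0,y_0),r)\cap S_{\ell_4^2}$ one has $|a-1|<r$ and $|b|<2r$ (by Birkhoff-James orthogonality of $(x_0,y_0)$ and $(-y_0^3, x_0^3)$) causes the indefinite cubic cross term to be absorbed by the negative quadratic in $b^2$.

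For the remaining case $x_0 y_0 = 0$, say $(x_0, y_0) = (1, 0)$ (other sign/coordinate choices are symmetric), $(1,0)^\perp \cap S_{\ell_4^2} = \{\pm(0,1)\}$, so the constraint $\|a(1,0) + b(0,1)\|_4 = 1$ becomes $a^4 + b^4 = 1$. For small $b > 0$, $a^2 = \sqrt{1 - b^4} = 1 - b^4/2 + O(b^8)$, whence $a^2 + b^2 \mu^2 = 1 + b^2 \mu^2 - b^4/2 + O(b^8) > 1$ for any $\mu > 0$, ruling out every candidate CPP constant $(r, \mu)$. Combining, $T$ is an extreme contraction iff $M_T \in \{\{\pm(1,0)\}, \{\pm(0,1)\}\}$. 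Finally, for $\mathbb{H} = \ell_2^n$: if $M_T = \{\pm(1,0)\}$, then since $(1,0) \perp_B (0,1)$ in $\ell_4^2$ and both spaces are smooth, Theorem 2.2 of \cite{S} gives $T(1,0) \perp_B T(0,1)$; as $T$ is rank one, $T(0,1) = \lambda T(1,0)$, and Birkhoff-James orthogonality forces $\lambda = 0$, producing the first displayed matrix with first column $T(1,0)$ of unit $\ell_2$-norm. The case $M_T = \{\pm(0,1)\}$ is symmetric. The main technical hurdle is the estimation in the middle paragraph, but this is essentially the same bookkeeping as in the proofs of Theorems \ref{theorem:l4} and \ref{theorem:ell24rank1}, and the sign structure makes a suitable $(r, \mu)$ straightforward to isolate.
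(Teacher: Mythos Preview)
Your proposal is correct and follows precisely the route the paper indicates (it states the result with the remark ``using similar method'' and gives no explicit proof): reduce via Theorem~\ref{theorem:completehrank1} to deciding when $((x_0,y_0),T(x_0,y_0))$ is a CPP, exploit that on the Hilbert side the target condition reduces to $a^2+b^2\mu^2\le 1$ independently of $w$, and then carry out the $\ell_4$ bookkeeping exactly as in Theorems~\ref{theorem:l4} and~\ref{theorem:ell24rank1}. Your expansion of $\|a(x_0,y_0)+bk(-y_0^3,x_0^3)\|_4^4$ and the choice $\mu^2<3k^2x_0^2y_0^2$ (so the $a^2b^2$ term dominates the indefinite $ab^3$ term for small $r$) are correct, and the $x_0y_0=0$ non-CPP argument and the matrix identification via Theorem~2.2 of \cite{S} are fine.
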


\begin{remark}
	It follows from Theorem \ref{theorem:ell4hrank1} that in $ \mathbb{L}(\ell_4^2,\ell_2^2), $ the matrix representations (with respect to the standard ordered basis) of the class of all rank one extreme contractions in $ \mathbb{L}(\ell_4^2, \ell_2^2) $ are given by $$
	\begin{bmatrix}
	x_1 & 0\\
	y_1 & 0	
	\end{bmatrix}
	,
	\quad
	\begin{bmatrix}
	0 & x_1\\
	0 & y_1	
	\end{bmatrix}
	,$$ 
	where, $x_1^2+y_1^2=1$. We would like to note that this description of rank one extreme contractions in $ \mathbb{L}(\ell_4^2, \ell_2^2) $ also follows from Theorem 2.2 of \cite{BR}. However, Theorem \ref{theorem:ell4hrank1} is applicable for any Hilbert space as the range space, whereas, Theorem 2.2 of \cite{BR} is applicable only in the two-dimensional case. 

Let us further note that the significance of the domain space and the range space in the study of extreme contractions between Banach spaces is further illustrated by Theorem \ref{theorem:ell24rank1} and Theorem \ref{theorem:ell4hrank1}. In $\mathbb{L}(\mathbb{H},\ell_4^2) $, there does not exist any rank one extreme contraction. However, in $\mathbb{L}(\ell_4^2,\mathbb{H}) $, indeed there are rank one extreme contractions. 
\end{remark}

The following  example shows that the condition $rank~T=1$  in Theorem \ref{theorem:rank1} is not necessary for $T$ to be such that $ T $ is not an extreme contraction.  
\begin{example}
	Define $T:{\ell_p}^2 \longrightarrow {\ell_p}^2$ by 
	\[T(1,~0)=(1,~0),~T(0,~1)=\frac{1}{2}(0,~1).\]
	Then it is easy to check that $M_T=\{\pm(1,~0)\}$. Again $((1,~0),~(1,~0))$ is a CPP. From \cite{G}, it is easy to see that $T$ is not an extreme contraction.
\end{example}
The above example calls for our attention towards the problem of characterizing rank two extreme contractions on a two-dimensional strictly convex and smooth Banach space. In this context, it is worthwhile to observe that if a linear operator $ T $, defined between two-dimensional strictly convex Banach spaces, attains norm at two linearly independent unit vectors then $ T $ is surely an extreme contraction. Therefore, it is sufficient to restrict our attention to rank two linear operators between two-dimensional strictly convex and smooth Banach spaces, which attain norm at only one pair of points.  As we will see in the following theorem, in order to obtain a desired characterization in this case, the notion of $ \mu- $CPP plays a vital role. We consider $T\in \mathbb{L(\mathbb{X}, \mathbb{Y})}$, where, $\mathbb{X},~\mathbb{Y}$ are two-dimensional smooth Banach spaces and $\mathbb{Y}$ is strictly convex. Let $\|T\|=1,~rank~T=2,~M_T=\{\pm x\}$. Let $y\in x^{\perp}\cap S_{\mathbb{X}}$. Then clearly $0<\|Ty\|<1$. As $\mathbb{X}$ and $\mathbb{Y}$ are both smooth so $Tx\perp_B Ty$. Let $w=\frac{Ty}{\|Ty\|}$. \\
In the following theorem we prove that $T$ is not an extreme contraction if and only if $(x,~Tx)$ is a $\mu$-CPP with respect to $(y,~w)$ for some $\mu >\|Ty\|$.
\begin{theorem}
Let $\mathbb{X},~\mathbb{Y}$ be two-dimensional smooth Banach spaces and in addition, $\mathbb{Y}$ be strictly convex. Let $ T$ be a rank two operator from $\mathbb{X}$ to $\mathbb{Y}$ with $\|T\|=1$ and $M_T=\{\pm x\}$. Then $T$ is not an extreme contraction if and only if  $(x,~ Tx)$ is a $\mu-$CPP with respect to the pair $(y,~w)$ for some $\mu > \|Ty\|$.
\end{theorem}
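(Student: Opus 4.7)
For the necessity direction, suppose $T$ is not an extreme contraction and write $T = \tfrac12(T_1 + T_2)$ with $\|T_i\| \le 1$ and $T_i \neq T$. Strict convexity of $\mathbb{Y}$ together with $\|Tx\| = 1$ forces $T_1 x = T_2 x = Tx$, so $x \in M_{T_1} \cap M_{T_2}$. By smoothness of both spaces and $x \perp_B y$, Theorem $2.2$ of \cite{S} yields $Tx \perp_B T_i y$; since $\mathbb{Y}$ is two-dimensional and smooth, $(Tx)^{\perp} = \operatorname{span}\{w\}$, so $T_i y = \lambda_i w$ with $\lambda_1 + \lambda_2 = 2\|Ty\|$ and $\lambda_1 \neq \|Ty\|$ (else $T_1 = T$). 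Relabelling if necessary, set $\mu := \lambda_1 > \|Ty\|$. Then $T_1(ax + by) = aTx + b\mu w$ and $\|T_1\| \le 1$ directly implies that $(x, Tx)$ is a $\mu$-CPP with respect to $(y, w)$ (in fact, with any $r > 0$ as the CPP radius).

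For the converse, suppose $(x, Tx)$ is $\mu$-CPP with respect to $(y, w)$, with CPP constant $(r, \mu)$ and $\mu > \|Ty\|$. The key auxiliary ingredient is a \emph{uniform gap}: since $M_T = \{\pm x\}$, $\mathbb{X}$ is finite-dimensional, and $S_{\mathbb{X}}$ is compact, there exists $\delta > 0$ with $\|Tu\| \le 1 - \delta$ for every $u \in S_{\mathbb{X}}$ lying outside $B(x,r) \cup B(-x,r)$. This supplies the global control of $T$ away from $\pm x$ that the bare CPP inequality does not provide.

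Pick $\epsilon \in \bigl(0,\ \min\{\mu - \|Ty\|,\ \|Ty\|,\ \delta/2\}\bigr)$ and define
\begin{align*}
T_1(ax + by) &= aTx + b(\|Ty\| + \epsilon)w, \\
T_2(ax + by) &= aTx + b(\|Ty\| - \epsilon)w.
\end{align*}
Then $T = \tfrac12(T_1 + T_2)$ and $T_i \neq T$ for $i = 1, 2$. Since both $\|Ty\| + \epsilon$ and $\|Ty\| - \epsilon$ lie in $(0, \mu]$, Proposition \ref{proposition:properties}(v) ensures that the CPP inequality holds at both values: $\|aTx + b(\|Ty\|\pm\epsilon)w\| \le 1$ for every $ax + by \in B(x,r) \cap S_{\mathbb{X}}$. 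Hence $\|T_i u\| \le 1$ for $u \in B(x,r) \cap S_{\mathbb{X}}$, and the case $u \in B(-x,r) \cap S_{\mathbb{X}}$ reduces to this via $\|T_i u\| = \|T_i(-u)\|$. For $u = ax + by \in S_{\mathbb{X}}$ outside $B(x, r) \cup B(-x, r)$, the orthogonality $x \perp_B y$ gives $|a| \le 1$ and hence $|b| \le 2$, so $\|T_i u - Tu\| = |b|\epsilon \le 2\epsilon$ yields $\|T_i u\| \le (1 - \delta) + 2\epsilon \le 1$. Therefore $\|T_i\| \le 1$ for $i = 1, 2$, and $T$ fails to be extreme.

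The principal obstacle I anticipate is precisely this local-to-global transition in the converse: the $\mu$-CPP provides control of $T_1$ only on a small arc of $S_{\mathbb{X}}$ near $x$, whereas $\|T_i\| \le 1$ is a global statement on all of $S_{\mathbb{X}}$. Bridging the two requires the gap $\delta$ supplied by the hypothesis $M_T = \{\pm x\}$ together with finite-dimensionality, after which Proposition \ref{proposition:properties}(v) lets the perturbation parameter $\epsilon$ be tuned small enough that the global $\|Tu\|$-bound absorbs the $O(\epsilon)$ error outside a fixed neighborhood of $\pm x$.
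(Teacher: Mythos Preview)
Your proof is correct and follows essentially the same approach as the paper's: both directions proceed via the same constructions (decomposing $T$ and reading off $T_iy$ as scalar multiples of $w$ for necessity; perturbing $Ty$ by $\pm\epsilon w$ and using the compactness gap $\delta$ away from $\pm x$ for sufficiency). Your treatment is in fact slightly cleaner than the paper's in two places: you handle the neighborhood $B(-x,r)$ explicitly by symmetry (the paper's statement of the gap on $B(x,r)^c\cap S_{\mathbb X}$ is literally false since $-x$ lies there), and in the necessity you simply relabel so that $\lambda_1>\|Ty\|$, whereas the paper uses both $T_1$ and $T_2$ only to pass to $k+|\delta|$. Your appeal to Proposition~\ref{proposition:properties}(v) in place of the paper's direct use of Proposition~\ref{prop:convexity} is harmless, since the proof of~(v) shows the same $z,w$ continue to work for smaller $\mu$.
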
 
\begin{proof}	
We first prove the necessary part of the theorem.\\
 Let $\|Ty\|=k$. Suppose that  $T$ is not an extreme contraction. Then there exists $T_1,~T_2 \in \mathbb{L}(\mathbb{X}, \mathbb{Y})$ such that $T=\frac{1}{2}(T_1+T_2)$, $T \neq T_1,~T\neq T_2,~\|T_1\|=\|T_2\|=1$. Clearly $Tx=  \frac{1}{2}(T_1 x+T_2 x)$. Since $Tx$ is an extreme point of $B_{\mathbb{Y}}$, we have, $Tx=T_1 x= T_2 x$, i.e., $x \in M_{T_1} \cap M_{T_2}$. Therefore, again applying Theorem 2.2 of \cite{S}, we have, $x\perp_B y \Longrightarrow T_1 x \perp_B T_1 y \Longrightarrow Tx \perp_B T_1y$. Therefore, $T_1 y=(k+ \delta)w$ and $T_2 y= (k- \delta)w$ for some $\delta \in \mathbb{R} \setminus \{0\}$. Let $ax+by \in S_{\mathbb{X}}$. Then $\|T_1(ax+by)\|\leq 1 \Longrightarrow \|aTx+b(k+\delta)w\|\leq 1$ and $\|T_2(ax+by)\|\leq 1 \Longrightarrow \|aTx+b(k-\delta)w\|\leq 1$. Therefore, $\|aTx+b(k+|\delta|) Ty\|\leq 1$. Thus, $(x,~ Tx)$ is a $\mu-$CPP with respect to the pair $(y,~w)$ for some $\mu >\|Ty\|$. This completes the proof of the necessary part of the theorem.\\
For the sufficient part, assume that $(x,~ Tx)$ is a $\mu-$CPP with respect to the pair $(y,~w)$ for some $\mu > \|Ty\|=k$. Let $\mu=k+\epsilon$ for some $\epsilon >0$. Therefore, there exists $r>0$ such that $ax+by \in B(x,~r) \cap S_{\mathbb{X}}$ implies that $\|aTx+ b(k+\epsilon)w\|\leq 1$. Since $M_T=\{\pm x\}$, there exists $\delta >0$ such that $\sup \{\|Th\|:~h \in B(x,~r)^c \cap S_{\mathbb{X}}\}=1- \delta$. Choose $0< c < min\{k, \frac{\delta}{2},\epsilon\}$. Define linear operators $T_1,~T_2$ from $\mathbb{X}$ to $\mathbb{Y}$ as follows:
\begin{align*}
T_1x & = Tx &    T_2 x & = Tx\\
T_1y & = (k+c)w  &   T_2y & = (k-c)w\\
\end{align*}
Clearly, $T=\frac{1}{2}(T_1+T_2),~ T \neq T_1,~T \neq T_2$.
Let $ax+by \in B(x,~r) \cap S_{\mathbb{X}} $. Then since $\|aTx+b(k+\epsilon)w\|\leq 1$, using $0<k-c < k+c\leq k+\epsilon$ and Proposition \ref{prop:convexity}, we obtain, $\|T_1(ax+by)\|=\|aTx+b(k+c)w\|\leq 1$ and $\|T_2(ax+by)\|=\|aTx+b(k-c)w\|\leq 1$. Now, let $ax+by \in B(x,~r)^c \cap S_{\mathbb{X}}$. Since $x\perp_B y$ and $\|ax+by\|=1$, $|b|\leq \|ax+by\|+\|ax\|\leq 2$. Then $\|T_1(ax+by)\|=\|aTx+b(k+c)w\|=\|T(ax+by)+b c w\| \leq \|T(ax+by)\|+|bc| \leq 1-\delta + 2c <1-\delta +\delta =1$. Therefore, $\|T_1\|= 1$. Similarly it can be shown that $\|T_2\|=1$. Hence, $T$ is not extreme contraction.
\end{proof}

Now we are in a position to completely characterize extreme contractions between two-dimensional strictly convex and smooth Banach spaces. We would like to remark that to the best of our knowledge, such a general characterization of extreme contractions is being presented for the very first time. Indeed, we have the following theorem:

\begin{theorem}
	Let $\mathbb{X},~\mathbb{Y}$ be two-dimensional smooth Banach spaces and in addition, $\mathbb{Y}$ be strictly convex. Let $T\in \mathbb{L}(\mathbb{X}, \mathbb{Y})$ be a norm one linear operator. Then $ T $ is an extreme contraction if and only if exactly one of the following three conditions holds true:\\
	(i):\\
	 (a) Rank $ T $ is one,\\
	    (b) $ M_T = \{ \pm x\}  $ for some $ x \in S_{\mathbb{X}}, $ \\
	    (c) $ (x,~Tx) $ is not a CPP.\\
	    
\noindent (ii):\\
 (a) Rank $ T $ is two,\\
	     (b) $ M_T = \{ \pm x\}  $ for some $ x \in S_{\mathbb{X}}, $\\
	     (c) $ Ty = k w $ for some $ y \in x^{\perp} \cap S_{\mathbb{X}}, $  $ w \in (Tx)^{\perp} \cap S_{\mathbb{X}} $ and $ 0 < k <1. $ \\
	     (d)  $(x,~ Tx)$ is not a $\mu-$CPP with respect to the pair $(y,~w)$ for any $\mu >k$.\\
	     
\noindent (iii):\\	
           (a) Rank $ T $ is two,\\
           (b) $ T $ attains norm at at least two pairs of linearly independent unit vectors $ \pm x,~ \pm y. $

\end{theorem}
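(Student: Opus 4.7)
The plan is to synthesize the earlier results by partitioning all norm-one operators $T \in \mathbb{L}(\mathbb{X}, \mathbb{Y})$ according to $\mathrm{rank}\,T$ and the structure of $M_T$. Since $\dim \mathbb{X} = \dim \mathbb{Y} = 2$, the rank is either $1$ or $2$, and $M_T$ (being symmetric) is either a single antipodal pair $\{\pm x\}$ or contains a second linearly independent pair. This yields exactly three mutually exclusive scenarios, corresponding to (i), (ii), (iii).

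For sufficiency I would treat each case in turn. Case (i) is immediate from Theorem \ref{theorem:completerank1}: with $\mathrm{rank}\,T = 1$ and $M_T = \{\pm x\}$, extremality is equivalent to $(x, Tx)$ not being a CPP. Case (ii) is immediate from the theorem immediately preceding the present one: with $\mathrm{rank}\,T = 2$ and $M_T = \{\pm x\}$, its contrapositive states that $T$ is extreme iff $(x, Tx)$ is not a $\mu$-CPP with respect to $(y, w)$ for any $\mu > \|Ty\| = k$.

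The genuinely new case is (iii), which I would establish by a short direct argument. Suppose $T$ attains norm at two linearly independent unit vectors $x$ and $y$, and write $T = \tfrac{1}{2}(T_1 + T_2)$ with $\|T_1\|, \|T_2\| \leq 1$. Then $Tx = \tfrac{1}{2}(T_1 x + T_2 x)$ with $\|T_i x\| \leq 1$ and $\|Tx\| = 1$; strict convexity of $\mathbb{Y}$ forces $T_1 x = T_2 x = Tx$, and analogously $T_1 y = T_2 y = Ty$. Since $\{x, y\}$ is a basis of $\mathbb{X}$, this gives $T_1 = T_2 = T$, so $T$ is extreme.

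For necessity I would fix an extreme contraction $T$ and argue by cases. If $\mathrm{rank}\,T = 1$, then $M_T$ reduces to a single antipodal pair $\{\pm x\}$, and Theorem \ref{theorem:completerank1} delivers case (i). If $\mathrm{rank}\,T = 2$ and $M_T = \{\pm x\}$, then for $y \in x^\perp \cap S_\mathbb{X}$ we have $y \notin M_T$, so $\|Ty\| < 1$; smoothness of both spaces together with Theorem $2.2$ of \cite{S} yields $Tx \perp_B Ty$, so $Ty = kw$ with $w \in (Tx)^\perp \cap S_\mathbb{Y}$ and $0 < k < 1$, placing us in case (ii) via the preceding theorem. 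If $\mathrm{rank}\,T = 2$ and $M_T \supsetneq \{\pm x\}$, then by symmetry $M_T$ contains a second linearly independent pair and we are in case (iii). Rather than a deep obstacle, the main point requiring care is the clean delineation of the three cases and the verification that they exhaust the possibilities; once that is in place, the argument is essentially a bookkeeping consolidation of the earlier theorems combined with the short strict-convexity argument for (iii).
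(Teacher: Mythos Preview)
Your approach matches the paper's exactly: the paper gives no separate proof of this theorem, treating it as an immediate consolidation of Theorem~\ref{theorem:completerank1}, the rank-two theorem immediately preceding the present one, and the observation (stated in the paragraph before that rank-two theorem) that a norm-one operator attaining norm at two linearly independent unit vectors is automatically extreme when $\mathbb{Y}$ is strictly convex. Your explicit argument for case (iii) is precisely that observation.

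There is one genuine gap. In the necessity direction you assert that if $\mathrm{rank}\,T = 1$ then $M_T$ automatically reduces to a single antipodal pair $\{\pm x\}$. The stated hypotheses do not give you this: the theorem assumes only that $\mathbb{X}$ is smooth, and the paper's own Remark following Theorem~\ref{theorem:completerank1} explicitly says that it is strict convexity of $\mathbb{X}$ which forces $M_T=\{\pm x\}$ for rank-one $T$. Without that extra assumption $M_T$ can be a nondegenerate segment in $S_{\mathbb{X}}$, and then your own case-(iii) argument applies verbatim (strict convexity of $\mathbb{Y}$ forces $T_1=T_2=T$ on any two linearly independent points of $M_T$), showing that such a $T$ is extreme while satisfying none of (i), (ii), (iii). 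You should therefore either import the strict-convexity hypothesis on $\mathbb{X}$ (as in the paper's abstract and overall setting) or treat this rank-one, large-$M_T$ subcase explicitly.
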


So far, our analysis has been mostly confined to two-dimensional (strictly convex and smooth) Banach spaces. A more general result, without any restriction on the dimension of the space, is given in the form of the following theorem:

\begin{theorem}
	Let $\mathbb{X}$ be a smooth, reflexive Banach space with Kadets-Klee property and $\mathbb{Y}$ be a smooth Banach space. Let $T\in \mathbb{K}(\mathbb{X},\mathbb{Y})$ be such that $\|T\|=1$ and $ T $ is a smooth operator. Then $M_T= \{\pm x\}, $ for some $  x \in S_{\mathbb{X}}. $ Moreover, if we additionally assume that $(x,~ Tx)$ is a CPP with CPP constants $(r,~\mu)$ and $0< \|T\|_{H} < \mu$, where $H$ is the unique hyperspace  such that  $x \perp_B H$, then $T$ is not an extreme contraction.
\end{theorem}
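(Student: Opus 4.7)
The plan is to prove the two assertions separately. For the claim $M_T = \{\pm x\}$, I would first invoke reflexivity of $\mathbb{X}$ and compactness of $T$ to obtain $M_T \neq \emptyset$, and then exploit smoothness of $T$ to rule out additional norm-attaining pairs. For each $y \in M_T$, the functional $F_y \in \mathbb{K}(\mathbb{X},\mathbb{Y})^*$ given by $F_y(A) := f_{Ty}(Ay)$---where $f_{Ty}$ is the unique support functional at $Ty$ supplied by smoothness of $\mathbb{Y}$---has unit norm and satisfies $F_y(T) = \|T\|$. Smoothness of $T$ forces all such $F_y$ to agree, while smoothness of $\mathbb{X}$ (uniqueness of the hyperplane supporting $B_\mathbb{X}$ at $y$) ensures that two linearly independent $y$'s in $M_T$ would yield distinct $F_y$'s, a contradiction that forces $M_T = \{\pm x\}$.

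For the second assertion I would produce an explicit convex decomposition of $T$. Let $H$ be the unique hyperspace with $x \perp_B H$, pick any $w \in (Tx)^{\perp} \cap S_\mathbb{Y}$, and pick any nonzero bounded linear functional $f : H \to \mathbb{R}$, extended to $\mathbb{X}$ by $f(x) := 0$. Define the rank-one compact operator $S \in \mathbb{K}(\mathbb{X},\mathbb{Y})$ by $S(z) := f(z)\, w$, and set $T_1 := T + cS$, $T_2 := T - cS$ for a small $c > 0$ to be fixed. Then $T = \tfrac{1}{2}(T_1 + T_2)$, both $T_i$ lie in $\mathbb{K}(\mathbb{X},\mathbb{Y})$, and $T_i \neq T$ since $S \not\equiv 0$. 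The remaining task is to verify $\|T_i\| \leq 1$ for $c$ sufficiently small, which I would split into two regimes depending on whether $z \in S_\mathbb{X}$ lies near $\pm x$ or far from $\pm x$.

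Near $\pm x$: after using the symmetry $(x, Tx) \leftrightarrow (-x, -Tx)$ in the CPP definition, I reduce to $z \in B(x, r) \cap S_\mathbb{X}$ and write $z = ax + bu$ with $u \in x^{\perp} \cap S_\mathbb{X}$ and $b \geq 0$. Applying Theorem 2.2 of \cite{S} yields $Tx \perp_B Tu$, so $Tu \in (Tx)^{\perp}$. Since $\mathbb{Y}$ is smooth, $(Tx)^{\perp} = \ker f_{Tx}$ is a linear subspace, hence $Tu + c\, f(u)\, w = \alpha\, w_0$ for some $w_0 \in (Tx)^{\perp} \cap S_\mathbb{Y}$ with $\alpha \leq \|T\|_H + c\|f\|$. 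Choosing $c$ small enough that $\|T\|_H + c\|f\| < \mu$, the CPP inequality $\|aTx + b\mu w_0\| \leq 1$ combined with Proposition \ref{prop:convexity} applied to $\lambda = b\alpha/\mu \in [0, b]$ gives $\|T_1 z\| = \|aTx + b\alpha w_0\| \leq 1$. Far from $\pm x$: a subsequence extraction using reflexivity, the Kadets-Klee property, compactness of $T$, and $M_T = \{\pm x\}$ (as in the proof of Theorem \ref{theorem:rank1}) shows that $\delta := 1 - \sup\{\|Tz\| : z \in S_\mathbb{X} \setminus (B(x,r) \cup B(-x,r))\} > 0$; and writing $z = ax + h$ with $\|h\| \leq 2$ yields $\|T_1 z\| \leq (1 - \delta) + 2c\|f\|$, which is $\leq 1$ as soon as $c\|f\| \leq \delta/2$. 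Picking $c$ below both thresholds completes the estimate and gives the analogous bound for $T_2$.

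I expect the main obstacle to be the near-$x$ case. The CPP hypothesis only controls $aTx + b\mu w_0$ along unit vectors $w_0 \in (Tx)^{\perp} \cap S_\mathbb{Y}$, so it is crucial that the perturbed image $Tu + cf(u)w$ can be renormalized within this set. This is precisely where smoothness of $\mathbb{Y}$---which makes $(Tx)^{\perp}$ a linear subspace---and the strict inequality $\|T\|_H < \mu$---which provides the slack needed to absorb the $cS$ perturbation while remaining within reach of the CPP bound via Proposition \ref{prop:convexity}---become indispensable.
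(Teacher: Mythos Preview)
Your proposal is correct and shares the paper's three key ingredients---smoothness of $T$ to pin down $M_T=\{\pm x\}$, the CPP hypothesis for the near-$x$ regime, and the reflexivity/Kadets--Klee/compactness extraction for the far regime---but it organizes them differently. The paper perturbs $T$ by scaling its action on $H$: with $z=ax+by+h_0$ it sets $T_n z = aTx + (1+\tfrac1n)(bTy+Th_0)$ and $S_n z = aTx + (1-\tfrac1n)(bTy+Th_0)$, then argues by contradiction along the sequence $\{T_n\}$: if every $\|T_n\|>1$, pick $y_n\in M_{T_n}$, show via CPP that $y_n\notin B(\pm x,r)$ for $n$ large, and finally use $T_n\to T$, compactness of $T$, and Kadets--Klee to force $y_n\to \pm x$, a contradiction. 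You instead perturb by a single fixed rank-one map $cS$, first isolate the quantitative bound $\sup\{\|Tz\|: z\in S_{\mathbb X}\setminus (B(x,r)\cup B(-x,r))\}=1-\delta<1$ as a preliminary lemma (proved by the same extraction), and then verify $\|T\pm cS\|\le 1$ directly for $c$ below two explicit thresholds. Your near-$x$ step also relies on the observation that $(Tx)^\perp$ is a linear subspace (from smoothness of $\mathbb Y$) so that $Tu+cf(u)w$ can be renormalized inside $(Tx)^\perp\cap S_{\mathbb Y}$; the paper avoids this because its perturbed direction is already $Th/\|Th\|$. Both arguments exploit the strict gap $\|T\|_H<\mu$ in the same way. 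Your route is more direct (no sequence, no contradiction) and makes the role of each hypothesis transparent; the paper's choice of perturbation is arguably more canonical since it requires no auxiliary choices of $w$ and $f$.
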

 \begin{proof}
Let us first observe that, since $T\in \mathbb{K}(\mathbb{X}, \mathbb{Y})$ is smooth, it follows from Theorem 4.2 of \cite{PSG} that $ M_T= \{\pm x\}, $ for some $ x \in S_{\mathbb{X}}. $ Since $\|T\|_{H}>0$, we can choose $y \in H \cap S_{\mathbb{X}}$ such that $Ty \neq 0$. Then in the subspace $ H, $ there exists a unique hyperspace $ H_0$ such that $y \perp_B H_0$. Hence, every $z \in \mathbb{X}$ can be uniquely written as $ax+by+ h_0$, where $a,~b \in \mathbb{R},~h_0 \in H_0 $. 
For each $n \in \mathbb{N},$ define $T_n,~ S_n $ on $\mathbb{X}$ as follows:
	\[T_n(z)=a Tx+(1+\frac{1}{n})(bTy+Th_0),\]
	\[ S_n(z)=a Tx+(1-\frac{1}{n})(bTy + Th_0).\]
 		Using the sequential criteria of a compact operator, it is easy to see that for each $n \in \mathbb{N},$ $T_n,~S_n \in \mathbb{K}(\mathbb{X}, \mathbb{Y})$.
 	 Clearly, $T= \frac{1}{2}T_n + \frac{1}{2}S_n$ and $T\neq T_n$, $T\neq S_n$  for all $n\in \mathbb{N}$. It is easy to verify that $\|S_n\|\leq 1$ for all $n \in \mathbb{N}$. If $\|T_n \| \leq 1$ for any $n$ then $ T$ is not an extreme contraction and we are done. So let us assume that $\|T_n\|>1$ for each $n \in \mathbb{N}$. Each $T_n$, being a compact operator on a reflexive Banach space, attains its norm. Let $y_n \in M_{T_n}$ for each $n \in \mathbb{N}$. \\
 	Now, since $0<\|T\|_H < \mu $, there exists $n_0\in \mathbb{N}$ such that $\frac{\mu}{\|T\|_H}>1+ \frac{1}{n}$ for all $n \geq n_0$.  	 Let $z= (ax+by+h_0) \in B(x,~ r)\cap S_{\mathbb{X}}$. We show that $\|T_n z\|\leq 1$  for all $n \geq n_0$. If $ by + h_0 = 0$ then clearly $\|T_nz \| \leq 1$ for each $ n \in \mathbb{N}.$ Let $by+h_0 \neq 0$. Then $z=ax+ch$, where $c=\|by+h_0\|$ and $h=\frac{1}{c}(by+h_0)$. Clearly,  $|a|\leq \|ax+ ch\|=\|z\|=1$ and  $|b|\leq \|by+h_0\|=\|ch\|=c$. First assume $Th\neq 0$. Then since $z=a x+ c h \in B(x,~ r) \cap S_{\mathbb{X}}$ and $(x,~ Tx)$ is a CPP with CPP constant $(r,~\mu)$, we have, $ \|a Tx \pm \frac{c \mu}{\|Th\|}Th \|\leq 1$. Again, $\frac{\mu}{\|Th\|}\geq \frac{\mu}{\|T\|_H}>1+ \frac{1}{n}$ for all $n \geq n_0$ and so using Proposition \ref{prop:convexity} we get, 	 
 	  \[\|T_n z\|  =    \|aTx + (1+\frac{1}{n})\{bTy+Th_0\}\| = \|a Tx+ c (1+\frac{1}{n})Th\|  \leq  1\]
Next, let us assume $Th=0$. Then $\|T_nz\|=\|aTx+(1+\frac{1}{n})(bTy+Th_0)\|=\|aTx+(1+\frac{1}{n})cTh\|=\|aTx\|\leq 1$.
 Since $y_n \in M_{T_n} $ and $ \|T_n \| >1, $ we have,  
  $y_n \notin B(x,~ r)$ for all $n \geq n_0$. Similarly, it can be shown that  $y_n \notin B(-x,~r)$ for all $n \geq n_0$. \\
 Since $\mathbb{X}$ is reflexive, $B_{\mathbb{X}}$ is weakly compact and so $\{y_n\}$ has a weakly convergent subsequence in $B_{\mathbb{X}}$. Without loss of generality, let us assume that $\{y_n\}$ converges weakly to $y_0 \in B_{\mathbb{X}}$. Let $z=(a x + b y + h_0)\in S_{\mathbb{X}}$. Then $|a|\leq \|(a x + by+h_0)\|=1$ and $ \|( b y + h_0)\|= \|z-a x\|\leq 1+ |a| \leq 2$. Now, $\|(T_n-T)z\|=\|(T_n-T)(ax+by+h_0)\|=\|\frac{1}{n}(bTy+Th_0)\|\leq \frac{1}{n}\|by+h_0\| \leq \frac{2}{n}$. This implies that $\|(T_n-T)z\|\longrightarrow 0$ as $n \longrightarrow \infty$. Hence, $T_n \longrightarrow T$. Since $T$ is a compact operator and $\{y_n\}$ converges weakly to $y_0$, $Ty_n$ converges strongly to $Ty_0$. Therefore, $\|T_n y_n- Ty_0\|\leq \|T_n y_n-Ty_n\|+ \|Ty_n-Ty_0\|\leq \|T_n-T\|+\|T y_n-Ty_0\|\longrightarrow 0$  as $n \longrightarrow \infty$. Hence, $T_ny_n \longrightarrow Ty_0 \Longrightarrow \|T_n y_n\|\longrightarrow \|Ty_0\| \Longrightarrow \|Ty_0\|\geq 1$, since $\|T_n y_n\|>1$ for all $n \in \mathbb{N}$. Since $y_0 \in B_{\mathbb{X}}$ and $\|T\|=1, $ we have, $\|Ty_0\|=1$. Therefore, $y_0 = \pm x $. Again, $y_n \rightharpoonup y_0$ and $\|y_n\|\longrightarrow \|y_0\|$. Hence, $y_n \longrightarrow y_0$. This implies that $y_0 \notin B(\pm x,~ r)$, a contradiction.  Therefore, $T$ is not an extreme contraction. This establishes the theorem.
 \end{proof}

We would also like to  illustrate that extreme contractions and this newly introduced notion of CPP play an important role in determining the geometry of the underlying space. First we deal with strict convexity. Indeed, our next two theorems separately give a characterization of strict convexity (in the two-dimensional case) and a sufficient condition for strict convexity (without any restriction on the dimension of the underlying space), using the notions of extreme contractions and CPP.

\begin{theorem}
	Let $\mathbb{X}$ be a two dimensional Banach space. Suppose for any $T \in \mathbb{L}(\mathbb{X})$, $T$ attains norm at least at two linearly independent vectors implies that $T$ is an extreme contraction. Then $\mathbb{X}$ is strictly convex. 
\end{theorem}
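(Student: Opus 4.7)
The plan is to argue by contraposition: assuming that $\mathbb{X}$ is not strictly convex, I will construct a norm-one operator $T \in \mathbb{L}(\mathbb{X})$ that attains its norm at two linearly independent unit vectors and yet is a non-trivial convex combination of two distinct contractions, contradicting the standing hypothesis.

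First I would invoke failure of strict convexity to fix $u,v \in S_{\mathbb{X}}$ with $u \ne v$ and $\|u+v\|=2$, so that the whole segment $[u,v]$ lies on $S_{\mathbb{X}}$. These vectors are automatically linearly independent, since $v = \lambda u$ with $\|u\|=\|v\|=1$ would force $\lambda = \pm 1$, and each case contradicts either $u \ne v$ or $\|u+v\|=2$. Setting $w := \tfrac{1}{2}(u+v) \in S_{\mathbb{X}}$, I would define operators $T, B \in \mathbb{L}(\mathbb{X})$ on the basis $\{u,v\}$ by
\begin{align*}
Tu &= u, & Tv &= w, \\
Bu &= u, & Bv &= u.
\end{align*}
Checking on basis vectors, $T = \tfrac{1}{2}(I+B)$, and clearly $T \ne I$ (since $Tv = w \ne v$) and $T \ne B$ (since $Tv = w \ne u = Bv$).

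The core step is the estimate $\|B\| \le 1$, which I would extract from a supporting functional. Choose $f \in S_{\mathbb{X}^*}$ supporting $B_{\mathbb{X}}$ at $w$; from $f(u) + f(v) = 2 f(w) = 2$ combined with $|f(u)|, |f(v)| \le 1$, both values are forced to equal $1$. Hence $|a+b| = |f(au+bv)| \le \|au+bv\|$ for all scalars $a,b$, so that $\|B(au+bv)\| = |a+b| \le \|au+bv\|$. Together with $\|Bu\|=1$ this gives $\|B\|=1$, and then $\|T\| \le \tfrac{1}{2}(\|I\|+\|B\|) = 1$ while $\|Tu\|=1$ forces $\|T\|=1$.

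Finally, $Tu = u$ and $Tv = w$ both have unit norm, so $T$ attains its norm at the linearly independent unit vectors $u$ and $v$. By the hypothesis of the theorem, $T$ must therefore be an extreme contraction, which directly contradicts $T = \tfrac{1}{2}(I+B)$ with $I \ne T \ne B$. Thus $\mathbb{X}$ must be strictly convex. The only non-routine ingredient is the supporting functional argument establishing $\|B\| \le 1$; everything else is a direct computation in the basis $\{u,v\}$.
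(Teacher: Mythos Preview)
Your proof is correct and follows the same contrapositive strategy as the paper, but the actual construction is different. The paper parametrizes the flat face as $\{x+\lambda y : |\lambda|\le \lambda_0\}\subset S_{\mathbb{X}}$ with $x\perp_B y$ and takes the \emph{rank-one} operator $Tx=x$, $Ty=0$, decomposed as $T=\tfrac12(T_1+T_2)$ where $T_i x = x\pm\tfrac12\lambda_0 y$, $T_i y=0$; the bound $\|T_i\|\le 1$ comes from $x\perp_B y$ via $|a|\le\|ax+by\|$. You instead take the endpoints $u,v$ of the segment as a basis, build the \emph{rank-two} operator $T=\tfrac12(I+B)$, and obtain $\|B\|\le 1$ from a supporting functional at the midpoint. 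The two norm estimates are dual to one another (a supporting functional at $x$ vanishing on $y$ is exactly what witnesses $x\perp_B y$), so neither argument is deeper than the other. Your decomposition has the pleasant feature that one summand is simply the identity, while the paper's version is symmetric under $y\mapsto -y$; both reach the contradiction equally cleanly.
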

\begin{proof}
	If possible, suppose that $\mathbb{X}$ is not strictly convex. Then there exists $x,~y \in S_{\mathbb{X}}$ and $\lambda_0 >0$ such that the line segment joining $x+ \lambda_0 y$ and $x- \lambda_0 y$ is a subset of $S_{\mathbb{X}}$. This clearly implies that $x\perp_B y$.	
	 Define linear operators $T, ~T_1,~T_2$ on $\mathbb{X}$ as follows:
	\begin{align*}
	Tx & = x &    T_1x & = x+ \frac{1}{2}\lambda_0 y & T_2 x & = x- \frac{1}{2}\lambda_0 y \\
	Ty & = 0  &   T_1y & = 0                         & T_2 y & = 0                        
	\end{align*}
	Then it is easy to check that $T$ attains norm at two linearly independent vectors $x+ \lambda_0 y$ and $x- \lambda_0 y$. Also it is easy to verify that $\|T\|=\|T_1\|=\|T_2\|=1$, for if  $z=ax+by \in S_{\mathbb{X}}$ then $\|T_1 z\|=\|a(x+\frac{1}{2}\lambda_0 y)\|=|a|\leq \|ax+by\|=1 $ and $ \|T_1 x\|=1$. Thus $T=\frac{1}{2}(T_1+T_2)$ and $T \neq T_1,~T\neq T_2$ shows that  $T$ is not an extreme contraction, contradicting the hypothesis. Therefore, $\mathbb{X}$ must be strictly convex.
\end{proof}

\begin{theorem}\label{theorem:strictlyconvex}
	Let $\mathbb{X}$ be a Banach space such that for any $x \in S_{\mathbb{X}}$, $(x,~x)$ is a CPP. Then $\mathbb{X}$ is strictly convex.
\end{theorem}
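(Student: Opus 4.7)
I would prove the contrapositive: if $\mathbb{X}$ is not strictly convex, produce an $x \in S_{\mathbb{X}}$ at which $(x,x)$ fails to be a CPP. Non-strict convexity supplies distinct unit vectors $u,v \in S_{\mathbb{X}}$ with the whole segment $[u,v]$ contained in $S_{\mathbb{X}}$. Set $e := (v-u)/\|v-u\|$ and consider the one-variable function $\psi(t) := \|u + t e\|$. Being the composition of the norm with an affine map, $\psi$ is convex; it equals $1$ on $[0,\|v-u\|]$, and so, by the standard convex-analysis fact that a convex function which is constant on an interval has nonnegative right derivative at the right endpoint and nonpositive left derivative at the left endpoint, one concludes $\psi \ge 1$ on all of $\mathbb{R}$. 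The level set $\psi^{-1}(1)$ is therefore a closed interval $[t_-, t_+] \supseteq [0, \|v-u\|]$, maximal in $\mathbb{R}$.

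Let $p := u + t_- e$ and $q := u + t_+ e$. Then $p, q \in S_{\mathbb{X}}$, and $p + \lambda e \in [p,q] \subseteq S_{\mathbb{X}}$ for every $\lambda \in [0, \|q-p\|]$; in particular $\|p + \lambda e\| = \psi(t_- + \lambda) \ge 1$ for all $\lambda \in \mathbb{R}$, so $e \in p^{\perp} \cap S_{\mathbb{X}}$, and by symmetry of Birkhoff--James orthogonality in the second slot, $-e \in p^{\perp} \cap S_{\mathbb{X}}$ too. The pivotal consequence of maximality is that
\[ \|p - \lambda e\| = \psi(t_- - \lambda) > 1 \quad \text{for every } \lambda > 0, \]
since $t_- - \lambda$ lies strictly to the left of the maximal plateau $[t_-,t_+]$ of $\psi^{-1}(1)$ and $\psi \ge 1$ everywhere.

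Now, supposing for a contradiction that $(p,p)$ is a CPP with constants $(r,\mu)$, I would take $z := e$, $w := -e$, $a := 1$ and any $b \in \bigl(0,\ \min\{r, \|q-p\|\}\bigr)$. Then $ap + bz = p + b e \in [p, q] \subseteq S_{\mathbb{X}}$ with $\|(ap+bz) - p\| = b < r$, so $ap + bz \in B(p,r)\cap S_{\mathbb{X}}$ and the CPP hypothesis is triggered. But $ap + b\mu w = p - b\mu e$ with $b\mu > 0$, so by the displayed inequality $\|ap + b\mu w\| > 1$, contradicting the CPP conclusion $\|ap + b\mu w\| \le 1$. Hence $(p,p)$ is not a CPP.

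The sole nontrivial step is the convexity-plus-maximality observation that gives $\|p - \lambda e\| > 1$ for every $\lambda > 0$; once this is in hand, the failure of the CPP inequality through the symmetric choice $w = -z$ is immediate. I expect no genuine obstacle, and no reduction to two dimensions is needed, since the whole argument is one-dimensional along the line $\{u + t e : t \in \mathbb{R}\}$.
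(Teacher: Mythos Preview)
Your proof is correct and follows essentially the same approach as the paper's: both argue by contrapositive, pass to an endpoint $p$ (the paper's $x = y + kz$) of a \emph{maximal} flat segment of $S_{\mathbb{X}}$, observe that the direction $e$ (the paper's $z$) lies in $p^{\perp}\cap S_{\mathbb{X}}$, and then violate the CPP condition by choosing $w = -e$ so that $ap+bz$ stays on the sphere while $ap + b\mu w$ has norm strictly greater than $1$. Your framing via the convex one-variable function $\psi(t)=\|u+te\|$ and its maximal plateau $[t_-,t_+]$ is a slightly cleaner packaging of what the paper does by setting $k=\inf\{\lambda>0:\|y+\lambda z\|>1\}$, but the underlying idea is identical.
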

\begin{proof}
	If possible, suppose that $\mathbb{X}$ is not strictly convex. Then there exists $y,~z \in S_{\mathbb{X}}$ and $\lambda_0 >0$ such that the line segment joining $y+ \lambda_0 z$ and $y- \lambda_0 z$ is a subset of $S_{\mathbb{X}}$. Let $S= \{\lambda>0:\|y+ \lambda z\|>1\}$. Then $S$ is nonempty, for, $\|y+ \lambda z\|\geq |\lambda|-1> 1$ for $|\lambda|>2$. Let $k= \inf S$. Let $x= y+ k z$. If $\|y+ k z\|>1$ then clearly there exists $\delta >0$ such that $\|y+ \lambda z\|>1$ for all $\lambda \in (k- \delta,~k+ \delta)$, which contradicts the fact that $k= \inf S$. So $\|x\|=1$. Clearly, $x \perp_B z$ and $\|x- \lambda z\|=1$ for all $\lambda \in [0,k]$. We claim that $(x,~x)$ is not a CPP which leads to a contradiction. Let $r>0$. Then we can find $\lambda \in [0,k]$ such that $x- \lambda z \in B(x,r)\cap S_{\mathbb{X}}$ but $\|x+ \mu \lambda z\|>1$ for any $\mu >0$. This completes the proof of the theorem.
\end{proof}

As the final result of this paper, we obtain a complete characterization of Hilbert spaces among Banach spaces in terms of CPP. 

\begin{theorem}
	A Banach space $\mathbb{X}$ is a Hilbert space if and only if for any $x \in S_{\mathbb{X}}$, $(x,~x)$ is a CPP with CPP constant $(r,~1)$ for any $r>0$. 
\end{theorem}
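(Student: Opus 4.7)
The forward implication is immediate from Proposition~\ref{proposition:properties}$(ii)$, so only the converse requires work. Suppose that for every $x\in S_{\mathbb{X}}$ the pair $(x,x)$ is a CPP with constant $(r,1)$ for every $r>0$.

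My first task is to translate the CPP hypothesis into a clean identity on the norm. Fix $x\in S_{\mathbb{X}}$; since $B(x,r)\supseteq S_{\mathbb{X}}$ whenever $r>2$, the assumption for any such $r$ collapses to
\[
\|ax+bz\|=1\ \Longrightarrow\ \|ax+bw\|\leq 1, \qquad z,w\in x^{\perp}\cap S_{\mathbb{X}},\ a,b\in\mathbb{R}.
\]
Interchanging the roles of $z$ and $w$ and using positive homogeneity of the norm upgrades this to the equality $\|ax+bz\|=\|ax+bw\|$ for all $a,b\in\mathbb{R}$ and all $z,w\in x^{\perp}\cap S_{\mathbb{X}}$. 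Putting $w=-z$ and rescaling then shows that whenever $u\perp_B v$ in $\mathbb{X}$ one has $\|u+v\|=\|u-v\|$, i.e., $u\perp_I v$. Thus $\perp_B\Rightarrow\perp_I$ throughout $\mathbb{X}$.

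By the Jordan--von Neumann theorem, $\mathbb{X}$ is a Hilbert space if and only if the parallelogram law holds in every two-dimensional subspace $V\subseteq\mathbb{X}$; and since both orthogonalities depend only on the restricted norm, the implication $\perp_B\Rightarrow\perp_I$ descends to each such $V$. It therefore suffices to prove that a two-dimensional normed space with $\perp_B\Rightarrow\perp_I$ is Euclidean. I expect this two-dimensional step to be the main obstacle and plan to handle it by a reflection-symmetry argument. For each $x\in S_V$ choose, via James, a unit vector $y\in x^{\perp}$; the identity $\|ax+by\|=\|ax-by\|$ shows that the linear map $R_x\colon ax+by\mapsto ax-by$ (eigenvalues $\pm 1$ with eigenvectors $x,y$) is an isometry of $V$ fixing $\mathrm{span}(x)$ pointwise. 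As $x$ runs over $S_V$, the fixed lines $\mathrm{span}(x)$ exhaust every one-dimensional subspace of $V$, so $\mathrm{Iso}(V)$ contains uncountably many distinct reflections. A linear isometry group preserves the unit ball and is therefore a compact subgroup of $\mathrm{GL}(V)$; by the standard averaging argument it is conjugate into $\mathrm{O}(V,\langle\cdot,\cdot\rangle)$ for some inner product $\langle\cdot,\cdot\rangle$ on $V$. Being infinite and compact, $\mathrm{Iso}(V)$ is not discrete, so its identity component is a nontrivial closed connected subgroup of the resulting $\mathrm{O}(2)$ and must equal $\mathrm{SO}(2)$. Invariance of $\|\cdot\|_V$ under this $\mathrm{SO}(2)$ forces the norm to be a constant multiple of $\sqrt{\langle\cdot,\cdot\rangle}$, so $V$ is Euclidean. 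Applying this conclusion to every two-dimensional subspace of $\mathbb{X}$ and invoking Jordan--von Neumann completes the proof.
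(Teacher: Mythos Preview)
Your argument is correct, and it shares with the paper the same pivotal intermediate step: deducing from the CPP hypothesis that Birkhoff--James orthogonality implies isosceles orthogonality throughout $\mathbb{X}$. The execution, however, differs on both sides of that step.

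To obtain $\perp_B\Rightarrow\perp_I$, the paper first invokes Theorem~\ref{theorem:strictlyconvex} to get strict convexity, then for fixed $x\perp_B y$ normalizes by $c=1/\|x+y\|$ and uses a continuity/convexity argument to force $\|cx-cy\|=1$. Your route is shorter: taking $r>2$ makes the CPP condition global on $S_{\mathbb{X}}$, a homogeneity-plus-symmetry argument upgrades the inequality to $\|ax+bz\|=\|ax+bw\|$, and $w=-z$ finishes. This avoids both the strict-convexity detour and the intermediate-value step.

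For the passage from $\perp_B\Rightarrow\perp_I$ to ``$\mathbb{X}$ is Hilbert'', the paper simply cites \cite[Theorem 5.1]{AMW}. You instead localize via Jordan--von~Neumann to two-dimensional subspaces and run a compact-group argument: the family of reflections $R_x$ forces $\mathrm{Iso}(V)$ to be infinite, hence (being compact) to contain a copy of $\mathrm{SO}(2)$, which makes the norm Euclidean. This is self-contained but imports heavier machinery (closed-subgroup theorem, Haar averaging) than the paper, which outsources the work to the literature. Either route is legitimate; yours trades an external citation for some Lie theory, while the paper keeps the final step as a black box.
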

\begin{proof} The necessary part follows easily from Proposition ~\ref{proposition:properties}(ii). For the sufficient part, we show that if $x,~y\in S_{\mathbb{X}}$ and $x\perp_B y$ then $x\perp_I y$. 
	Since for any $x \in S_{\mathbb{X}}$, $(x,~x)$ is a CPP, from Theorem \ref{theorem:strictlyconvex}, we have, $\mathbb{X}$ is strictly convex. Let $x,~y \in S_{\mathbb{X}}$ and $ x\perp_B y$. Then $\|x+y\|>1$. Let $c=\frac{1}{\|x+y\|}$. Then $0<c< 1$ and $c(x+y)\in S_{\mathbb{X}}$. Clearly, $c(x+y)\in B(x,~1)$ and so  $\|cx-cy\|\leq 1$, since $(x,~x)$ is a CPP. We claim that $\|cx-cy\|=1$. If possible, suppose that $\|cx-cy\|< 1$. Define a function $f:\mathbb{R}\longrightarrow \mathbb{R}$ by 
		\[f(a)=\|cx-ay\|.\]
	Clearly, $f$ is continuous on $\mathbb{R}$ and $f(c)<1$. Also, $f(c+2)=\|cx-(c+2)y\|\geq c+2-c=2 >1$. Therefore, by continuity of $ f, $ there exists some	 $\lambda \in (c,~c+2) $ such that $f(\lambda)=1$, i.e., $\|cx-\lambda y\|=1$. Since $(x,~ x)$ is a CPP, $\|cx+\lambda y\|\leq 1$.Then
	\begin{eqnarray*}
	   cx+cy &=& (1-\frac{c}{\lambda})cx + \frac{c}{\lambda}(cx+\lambda y)\\
	   \Longrightarrow \| cx+cy\| &\leq & (1-\frac{c}{\lambda})\|cx\| + \frac{c}{\lambda}\|(cx+\lambda y)\|\\
	    \Longrightarrow \| cx+cy\| &< & (1-\frac{c}{\lambda}) + \frac{c}{\lambda}\\
	    \Longrightarrow \| cx+cy\| &< & 1,~ a ~ contradiction.
	\end{eqnarray*} 
 Therefore, $\|cx-cy\|=1$ and so $\|x+y\|=\|x-y\|$.  Hence, $x\perp_B y \Longrightarrow x\perp_I y$. Finally, let us observe that from Theorem 5.1 of \cite{AMW}, it now follows that $\mathbb{X}$ is a Hilbert space. This completes the proof of the theorem. 
\end{proof}

\begin{acknowledgement}
Dr. Debmalya Sain feels elated to acknowledge the tremendous positive contribution of the following three persons: Professor Gadadhar Misra, for introducing the author to the study of extreme contractions; Professor Vladimir Kadets, for the constant help, encouragement and inspiration; Mrs. Krystina DeLeon, for her beautiful friendship.
\end{acknowledgement}

\end{document}